\newtheorem{thm}{Theorem}[section]
\newtheorem{prop}[thm]{Proposition}
\newtheorem{lma}[thm]{Lemma}
\newtheorem{rem}[thm]{Remark}
\newenvironment{rmk}{\begin{rem}\rm}{\end{rem}}
\newtheorem*{thmstar}{Theorem}
\theoremstyle{definition}
\newtheorem{dfn}[thm]{Definition}
\theoremstyle{remark}
\newtheorem{remark}[thm]{Remark}
\newtheorem{ex}[thm]{Example}
\numberwithin{equation}{section}
\newcommand{\bbr}{\begin{remark}}        
\newcommand{\eer}{\end{remark}}
\font\bbb=msbm10 scaled 1100
\newcommand{\bea}{\begin{eqnarray}}
\newcommand{\eea}{\end{eqnarray}}
\newcommand{\bmini}{\begin{center}\begin{minipage}{5in}}
\newcommand{\emini}{\end{minipage}\end{center}}
\newcommand{\R}{{\mbox{\bbb R}}}
\newcommand{\C}{{\mbox{\bbb C}}}
\newcommand{\Z}{{\mbox{\bbb Z}}}
\newcommand{\nats}{\mbox{\bbb N}}
\newcommand{\A}{{\mathcal{A}}}
\newcommand{\M}{{\mathcal{M}}}
\newcommand{\pa}{\partial}
\newcommand{\img}{\operatorname{Im}}
\newcommand{\Spec}{\mbox{Spec}}
\newcommand{\Aug}{\mbox{Aug}}
\author[J.~Harper]{John G. Harper}
\author[M.~Sullivan]{Michael G. Sullivan}
\address{Department of Mathematics, University of Massachusetts,
 Amherst, MA 01003-9305}
\email{jgharper@student.umass.edu, sullivan@math.umass.edu}
\begin{document}

\title{A bordered Legendrian contact algebra}


\begin{abstract}
In \cite{Sivek11}, Sivek proves a ``van Kampen" decomposition theorem for the combinatorial Legendrian contact algebra (also known as the Chekanov-Eliashberg algebra) of knots in standard contact $\R^3$ .
We prove an analogous result for the holomorphic curve version of the 
Legendrian contact algebra of certain Legendrians submanifolds in standard contact $J^1(M).$ This includes all 1- and 2-dimensional Legendrians, and some
higher dimensional ones. We present various applications including a Mayer-Vietoris sequence for linearized contact homology similar to \cite{Sivek11} and a connect sum formula for the augmentation variety introduced in \cite{Ng08}. 
The main tool is the theory of gradient flow trees developed in \cite{Ekholm07}.

\end{abstract}

\maketitle

\section{Introduction}
\label{Introduction.sec}

Consider the one-jet space 
$J^1(N) = T^\ast N \times \R$
of an $n$-dimensional orientable manifold $N,$ equipped with the standard contact structure
$\ker\{dz - ydx\}.$ Here $z$ is the $\R$-coordinate and $(x,y) = (x_1, \ldots, x_n, y_1, \ldots, y_n)$ are local coordinates
for $T^*N.$  
Let $L \subset J^1(N)$ be a closed (possibly non-connected) $n$-dimensional orientable Legendrian submanifold.
For most of this paper, we consider the $n=2$ case; however, certain results
generalize to the $n=1$ case and a subset of the $n \ge 3$ case.

When $J^1(N)=  J^1(\R) = \R^3,$ Chekanov \cite{Chekanov02} and independently
Eliashberg \cite{Eliashberg00} developed a combinatorial differential
graded algebra (DGA), $Ch(L)$ for a Legendrian knot $L$ whose quasi-isomorphism type (which determines its homology) depends only on the Legendrian isotopy class. The homology of this Chekanov-Eliashberg algebra, while not a complete invariant,
distinguishes many Legendrian isotopy classes indistinguishable by previous invariants. The DGA, originally defined
over $\Z_2$-coefficients, was extended to $\Z H_1(L)$-coefficients in \cite{EtnyreNgSabloff02}.
At around the same time, a DGA for Legendrian submanifolds in any contact manifold,
known as the Legendrian contact algebra, $A(L),$
was sketched as part of symplectic field theory \cite{EliashbergGiventalHofer00}.
This theory is defined using $J$-holomorphic (pseudo-holomorphic) disks
and generalizes the Chekanov-Eliashberg algebra. 
In the $L \subset J^1(N)$ case, $A(L)$ was shown to be a well-defined theory in \cite{EkholmEtnyreSullivan07}. 

Motivated by a decomposition theorem for Heegaard Floer homology 
\cite{LipshitzOzsvathThurston}, Sivek  in \cite{Sivek11} proves a decomposition
theorem for the front reformulation of the Chekanov-Eliashberg algebra introduced
in \cite{Ng03}.
He divides the algebra's generators into two sets, based on which side
of an arbitrary $x=c$ line in the front projection $J_0(\R) = \R^2_{x,z},$
the generators' geometric representatives lie.
Introducing a new set of generators associated to the dividing line, he constructs
three DGAs: $I_n,$ generated by this new set; $D(L^D),$ whose generators include
those on the right (greater $x$-value) of the line; and $A(L^A),$ whose generators include those on the left. He shows there is a natural commutative diagram
\[
\xymatrix{
I \ar[d] \ar[r] &D(L^D)\ar[d]\\
A(L^A)\ar[r] &Ch(L) }
\]
and proves it is a push-out square in the category of DGAs.

The Legendrian contact algebra $A(L)$ is generated by the set of Reeb chords of the Legendrian submanifold. The differential counts certain $J$-holomorphic disks.
Consider any decomposition of these generators into two sets, $S_1$ and $S_2,$ which analogous to \cite{Sivek11}, corresponds to a decomposition
of $L$ into $L_1$ and $L_2$ by some (possibly disconnected) hypersurface in $J^0(N) = N \times \R_z$ containing the $z$-direction. $S_i$ are the chords of $L_i.$
Through a family of Legendrians, we deform $L$ to $L' = L'_1 \cup L'_2$ in a neighborhood of this hypersurface, possibly generating new Reeb chords in a controlled way (see Remark \ref{VKrefined.rem}). Here $L_i'$ is a small thickening in $L'$ of the image of $L_i$ under the isotopy, and so $L_1'$ will intersect $L_2'.$
Note that $A(L)$ and $A(L')$ are quasi-isomorphic, and $A(L)$ sits in $A(L')$ as a graded sub-algebra, but not necessarily as a sub-DGA.
We construct three DGAs: 
$A_1$ generated  by the chords of $L_1';$
$A_2$ generated  by the chords of $L_2';$ and
$A_3$ generated by the chords of $L_1' \cap L_2'.$
All 3 are sub-DGAs of $A(L')$ and $A_3$ is a sub-DGA of $A_1$ and of $A_2.$

The key step is to substitute the theory of $J$-holomorphic disks with the theory of gradient flow trees, which was developed in \cite{Ekholm07} and is much easier to manage. See \cite[Theorem 1.1]{Ekholm07} (restated as \cite[Theorem 2.5]{EkholmKalman08} and \cite[Theorem 2.10]{Rizell}).
Not only is there a one-to-one correspondence, but the boundaries of the disks
``match" with the gradient flow trees.
Whenever the hypotheses of \cite[Theorem 1.1]{Ekholm07}
hold (which include all Legendrians $L$ in $J^1(N)$ when $n=1,2$), we prove the following ``van Kampen" theorem.

\begin{thm} \label{VK.thm}
The DGA inclusion maps form a commutative square
\[
\xymatrix{
A_3 \ar[d] \ar[r] &A_1 \ar[d]\\
A_2 \ar[r] &A(L') }
\]
which is a pushout square in the category of DGAs.
\end{thm}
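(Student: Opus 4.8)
The plan is to construct the canonical comparison morphism out of the abstract pushout and prove it is an isomorphism of DGAs; since a commutative square is a pushout exactly when the induced map from the abstract pushout to the fourth corner $A(L')$ is an isomorphism, this reduces the theorem to identifying $A(L')$ with the amalgamated free product $P := A_1 \ast_{A_3} A_2$. Recall that this pushout in the category of DGAs is computed as the coequalizer of the two maps $A_3 \to A_1 \ast A_2$ into the free product (coproduct) of DGAs: its underlying graded algebra is $A_1 \ast A_2$ modulo the differential ideal identifying the two copies of $A_3$, and its differential $\partial_P$ is the induced one, characterized as the unique derivation restricting to $\partial_{A_1}$ on the image of $A_1$ and to $\partial_{A_2}$ on the image of $A_2$. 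As graded algebras, $A(L')$, $A_1$, $A_2$, $A_3$ are free (tensor) algebras on their generating sets of Reeb chords, and each inclusion $A_3 \hookrightarrow A_i$ carries generators to generators; writing $S_3$ for the chords of $L_1'\cap L_2'$ and $S_i'$ for the chords of $L_i'$ not lying in $L_1'\cap L_2'$, it follows formally that $P$ is the free graded algebra on $S_1' \sqcup S_2' \sqcup S_3$.

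First I would match up generators. By construction of the decomposition, the parts of $J^0(N)$ underlying $L_1'$ and $L_2'$ cover $J^0(N)$ and overlap only over a neighborhood of the dividing hypersurface, where both $L_1'$ and $L_2'$ agree with $L_1'\cap L_2'$; since the two endpoints of any Reeb chord have the same projection to $N$, every Reeb chord of $L'$ lies in exactly one of $S_1'$, $S_2'$, $S_3$. Hence the morphism $\Phi\colon P \to A(L')$ induced by the inclusions $A_i \hookrightarrow A(L')$ restricts to a bijection between the generators of $P$ and the Reeb chords of $L'$, so $\Phi$ is an isomorphism of graded algebras. (Degrees are inherited because each chord carries its grading intrinsically; with group-ring coefficients $\Z H_1$ one needs in addition that the maps on first homology are compatible, which I would record as a hypothesis.)

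Next I would verify that $\Phi$ intertwines the differentials, which — a derivation of a free graded algebra being determined by its values on generators — amounts to comparing $\partial_P$ and $\partial_{A(L')}$ on $S_1'\sqcup S_2'\sqcup S_3$. For a generator $s\in S_1'\cup S_3$ one has $\partial_P(s)=\partial_{A_1}(s)$ by definition of the amalgamated differential, while $\partial_{A(L')}(s)=\partial_{A_1}(s)$ as well, because $A_1$ is a sub-DGA of $A(L')$; the case $s\in S_2'\cup S_3$ is symmetric, and the two prescriptions agree on $S_3$ since $A_3$ is a sub-DGA of both $A_1$ and $A_2$. (The same bookkeeping confirms $\partial_P^2=0$ generator by generator from $\partial_{A_1}^2=\partial_{A_2}^2=0$, so that $P$ is genuinely a DGA.) Therefore $\Phi$ is an isomorphism of DGAs and the square is a pushout. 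Equivalently, one could check the universal property directly: given a DGA $B$ and DGA maps $A_1\to B$, $A_2\to B$ agreeing on $A_3$, define a map from $A(L')$ by sending each chord of $L'$ via whichever of the two maps contains it, and verify one generator at a time that it is a chain map.

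The content that this formal argument rests on — and the hard part — is the input, used above, that $A_1$, $A_2$, $A_3$ are sub-DGAs of $A(L')$; that is, that the differential of a Reeb chord of $L_i'$ only involves Reeb chords of $L_i'$. This is where the geometry enters: one must show that every rigid gradient flow tree of $L'$ — equivalently, by \cite[Theorem 1.1]{Ekholm07}, every rigid $J$-holomorphic disk counted by $\partial_{A(L')}$ — all of whose punctures lie at chords of $L_i'$ is confined to the region over the $i$-th piece, and hence is a flow tree of $L_i'$. The mechanism is that the deformation carrying $L$ to $L'$ arranges $L'$ in a product-like normal form near the dividing hypersurface, so that the negative gradient flows of the relevant differences of local defining functions cannot cross it; this is exactly where the hypotheses of \cite[Theorem 1.1]{Ekholm07}, and the controlled creation of new chords from Remark~\ref{VKrefined.rem}, are used. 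Granting this confinement, the identification of $A(L')$ with the pushout $P$ is formal, as sketched above.
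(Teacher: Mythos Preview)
Your formal reduction is correct and is exactly the step the paper leaves implicit: once each $A_i$ is a sub-DGA of $A(L')$ and the Reeb chords of $L'$ partition as $S_1'\sqcup S_2'\sqcup S_3$, the comparison map from the amalgamated free product to $A(L')$ is an isomorphism of DGAs for the reasons you give. The paper compresses this into the single sentence ``It suffices to show that $A_1$ and $A_2$ are also sub-DGAs of $A(L')$,'' so on the algebraic side your write-up is more explicit than, but equivalent to, the paper's.

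Where your proposal falls short of a proof is precisely the part you flag as ``the hard part.'' The paper's Section~\ref{VKProof.sec} is almost entirely devoted to \emph{constructing} the isotopy $\phi_\lambda$ via an explicit bump function $g_\delta$, so that (i) the new chords near $h$ all have action of order $\delta$, (ii) no chords appear in the linear region $\epsilon/3<|t|<2\epsilon/3$, and (iii) the front remains generic near cusp edges (Figure~\ref{CuspMin.fig}). None of this is formal, and your sketch does not supply it. In particular, your description of the mechanism is not quite right: the gradient flow trees \emph{are} allowed to cross the dividing hypersurface. The paper's argument is rather that at the crossing point the height difference of the $1$-jet lifts drops to order $\delta$; since this quantity is monotone decreasing along the tree and the sheets outside the $\epsilon/3$-neighborhood are separated by order $1$, the tree is thereafter trapped in the small-action region near $h$ (or near the modified cusp), and every subsequent negative puncture lands in $A_3\subset A_1\cap A_2$. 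That action-filtration trapping, not a literal barrier, is what makes $A_1$ and $A_2$ closed under $\partial$.
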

This result applies to whichever coefficient ring for which $A(L)$ (and hence $A(L')$) is defined.

After surveying some contact geometry and homological algebra in Section \ref{Background.sec},
we state and prove Theorem \ref{VK.thm} and a generalization, Theorem \ref{VKmultiple.thm}, in Section \ref{Main.sec}. 
When $J^1(N) = \R^3,$ our pushout square can differ from the one in \cite{Sivek11}.

In Section \ref{Applications.sec}, we present some applications and computations of Theorem \ref{VK.thm}.
We begin with a Mayer-Vietoris sequence for all the linearized contact homology theories
of $L.$ This is exactly as in \cite{Sivek11}, and in fact our result is easier.
Note that since a DGA is considered as a ``homotopy theory," and the linearization the
``corresponding homology," a Van Kampen theorem (Theorem \ref{VK.thm}) and a
Mayer-Vietoris sequence are not surprising.
We next consider the cusp connected sum construction $\#$ introduced
in \cite{EkholmEtnyreSullivan05a} for Legendrian submanifolds of dimension 
$n \ge 2.$ 
We relate the augmentation varieties (see Section \ref{Background.sec}) of the Legendrians and their sum.
\begin{thmstar} [\ref{CS.thm}]
\[
\Aug(A_{\mathbb{C}H_1(L_1)},\partial_{L_1})\times \Aug(A_{\mathbb{C}H_1(L_2)},\partial_{L_2})= \Aug(A_{\mathbb{C}H_1(L)},\partial_L).
\]
\end{thmstar}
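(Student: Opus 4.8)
The plan is to exhibit the cusp connected sum $L=L_1\#L_2$ of \cite{EkholmEtnyreSullivan05a} as one of the hypersurface decompositions to which Theorem~\ref{VK.thm} applies, and then to run the resulting pushout square through the contravariant functor $\Aug(-)$ that sends a DGA to its variety of augmentations. Since the construction of $\#$ takes place in a small standard neighborhood of a cusp edge, I would choose the separating hypersurface of $J^0(N)$ to cut through the connecting neck; the deformation $L\rightsquigarrow L'=L_1'\cup L_2'$ of Remark~\ref{VKrefined.rem} then has $L_i'$ Legendrian isotopic to $L_i$ (up to the controlled birth of canceling Reeb chord pairs), while the overlap $L_1'\cap L_2'$ is a standard neck region carrying \emph{no} Reeb chords. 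Consequently, as sub-DGAs of $A(L')$, the algebra $A_3$ is exactly the coefficient ring while $A_i$ is (a stabilization of) $A(L_i)$ with scalars extended along $H_1(L_i)\hookrightarrow H_1(L')$. Because connected sum in dimension $n\ge 2$ gives $H_1(L)=H_1(L_1)\oplus H_1(L_2)$, this realizes $A_3=\C[H_1(L')]=\C[H_1(L_1)]\otimes_\C\C[H_1(L_2)]$ and $A_i=A(L_i)\otimes_\C\C[H_1(L_{3-i})]$.

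Applying $\Aug(-)$, which turns the pushout of Theorem~\ref{VK.thm} into a fiber product, gives $\Aug(A(L'))=\Aug(A_1)\times_{\Aug(A_3)}\Aug(A_2)$. Since $A_3$ is the coefficient ring, $\Aug(A_3)=\Spec\C[H_1(L')]=\Spec\C[H_1(L_1)]\times\Spec\C[H_1(L_2)]$; and since $\C[H_1(L_{3-i})]$ is central and differential-free in $A_i=A(L_i)\otimes_\C\C[H_1(L_{3-i})]$, one has $\Aug(A_i)=\Aug(A(L_i))\times\Spec\C[H_1(L_{3-i})]$, the structure map to $\Aug(A_3)$ being the restriction of an augmentation to $H_1(L_i)$ on the first factor and the identity on the second. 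The fiber product over $\Aug(A_3)$ therefore forces the two character coordinates of a compatible pair to coincide with the characters already recorded by the two augmentations, so it collapses to the honest product $\Aug(A(L_1))\times\Aug(A(L_2))$. (Scheme-theoretically: base change of the pushout along a fixed character $\chi=(\chi_1,\chi_2)$ commutes with the colimit and produces the free product $A(L_1)_{\chi_1}\ast_\C A(L_2)_{\chi_2}$, whose augmentations are unconstrained pairs of augmentations of the two factors; unioning over $\chi$ gives the product.)

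Finally, $L\rightsquigarrow L'$ is a Legendrian isotopy, so by invariance of the augmentation variety \cite{Ng08} the varieties $\Aug(A(L))$ and $\Aug(A(L'))$ coincide, and combining this with the previous paragraph gives the equality asserted in the statement. The step I expect to be the main obstacle is the geometric input in the first paragraph: verifying that the EES cusp connected sum really satisfies the hypotheses of Section~\ref{Main.sec} with a Reeb-chord-free overlap and with $A_i$ acquiring no generators beyond those of a stabilization of $A(L_i)$ --- concretely, that the connecting neck contributes no Reeb chords, that no new ``mixed'' chords between $L_1'$ and $L_2'$ are created, and that the thickening of $L_i$ inside $L'$ changes its Chekanov--Eliashberg algebra only by stabilization. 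Once this and the homology computation $H_1(L)=H_1(L_1)\oplus H_1(L_2)$ are secured, the rest is the formal manipulation above: pushing Theorem~\ref{VK.thm} through $\Aug$ with an amalgamating ring that is a product of character tori degenerates the fiber product to the product of the two augmentation varieties.
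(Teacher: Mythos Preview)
Your overall strategy --- push the van Kampen square through an ``augmentation'' functor and collapse the fiber product over the character torus --- is close in spirit to the paper's, but two of your concrete claims are wrong and one genuine step is missing.

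First, the geometric input you flag as ``the main obstacle'' does not come out the way you hope. The cusp connected sum neck is \emph{not} Reeb-chord-free: Lemma~\ref{CS.lma} shows the construction produces exactly one new chord $h$ sitting on the neck, with $|h|=n-1$ and $\partial h=0$. Consequently $A_3$ is not the bare coefficient ring but the free DGA on $h$ over it, and $A_i'$ is not a stabilization of $A(L_i)$: by Lemma~\ref{CS.lma}(1) the differential of a generator $c\in A_i$ inside $A_i'$ differs from $\partial_i c$ by terms divisible by $h$. The paper exploits this directly (Lemma~\ref{Augmentation.lma}): since $n\ge 2$ forces $\epsilon(h)=0$, the $h$-divisible discrepancy vanishes under any augmentation, giving the bijection between augmentations of $(A,\partial)$ and pairs $(\epsilon_1,\epsilon_2)$. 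Your argument can be repaired along these lines, but the specific assertions ``no Reeb chords'', ``$A_3$ is exactly the coefficient ring'', and ``$A_i$ is a stabilization of $A(L_i)$'' are all false as stated.

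Second, and more seriously, you treat $\Aug(-)$ as a contravariant functor sending pushouts to fiber products. It is not: the augmentation \emph{variety} is by definition the Zariski \emph{closure} of the set of good points in $\Spec(\C[H_1(-)])$, and closure does not automatically commute with fiber products. What the pushout square gives you (after fixing the first issue) is only that the \emph{set} of good points in $\Spec(\C[H_1(L)])\cong\Spec(\C[H_1(L_1)])\times\Spec(\C[H_1(L_2)])$ is the product $X\times Y$ of the two good-point sets. One must then prove $\overline{X\times Y}=\overline{X}\times\overline{Y}$ in the Zariski topology; the paper does this explicitly at the end of the proof of Theorem~\ref{CS.thm} via a short slicing argument. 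Your ``scheme-theoretic'' parenthetical does not address this, and without it the conclusion about the varieties (as opposed to the raw good-point sets) does not follow.
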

We end Section \ref{Applications.sec} with some examples.

 {\em Acknowledgements :}
The authors were partially supported by NSF grant DMS-1007260.
They wish to thank discussions with Paul Hacking and Steven Sivek, useful detailed feedback from the referee, as well as
the UMass Research Experience for Undergraduates program.


\section{Background} \label{Background.sec}

In Section \ref{LCA.sec}, we briefly review the $J$-holomorphic disk definition
of the Legendrian contact algebra defined in \cite{EkholmEtnyreSullivan05a, EkholmEtnyreSullivan05c, EkholmEtnyreSullivan07}.
In Section \ref{GFT.sec}, we recall the gradient flow tree reformulation
of this algebra from \cite{Ekholm07}.
In Section \ref{Augmentations.sec}, we review the augmentation variety defined
in \cite{Ng08} and the linearized contact homology theories for a Legendrian
first introduced in \cite{Chekanov02}.

\subsection{Legendrian contact algebra} \label{LCA.sec}

Recall the standard contact structure of $J^1(N)$ has a canonical
contact form, locally written as $dz - ydx := dz - \sum_i^n y_i dx_i.$
Since the Reeb vector field is $\pa_z,$ the orbits starting and ending on the Legendrian submanifold $L,$ known as Reeb chords, are vertical.

Consider the Lagrangian and front projections
\[
\pi_{\mathbb C}: J^1(N) = T^\ast N \times \R \to T^\ast N \quad \mbox{and}\,\, \pi_F: J^1(N) \to J^0(N) = N \times \R.
\]
For generic $L,$ $\pi_{\mathbb C}(L)$ is an exact Lagrangian immersion with isolated
transverse double points, and $\pi_F(L)$ is the graph of local Morse-Smale 
functions $f_i.$
The set of double points is canonically isomorphic to the set of critical points of positive
difference functions $f_i-f_j,$ which we can assume to be Morse-Smale as well.
This set is also isomorphic to the set of Reeb chords of $L.$ 

We grade this set.
Given a Reeb chord $c$ associated to a critical point $x$ of $f_i - f_j,$ let 
$c^\pm$ denote the endpoints of the vertical line segment $\pi_F(c),$ with $c^+$ having higher $z$-coordinate. Assume $c^+$ and $c^-$ lie in the same component of $L.$
Pick a path $\gamma_c: I \to \pi_F(L)$ from $c^+$ to $c^-$ that transversely intersects the
codimension 1 cusp singularities of $\pi_F(L),$ and avoids all higher codimension front
singularities. 
The path $\gamma_c$ is called a capping path for $c,$ in the literature.
Let $D(\gamma_c)$ and $U(\gamma_c)$ denote the number
of cusp edges $\gamma_c$ traverses in the negative and positive $z$-direction.
Define its grading to be
\[
|c| = D(\gamma_c) - U(\gamma_c) + \mbox{ind}_x (f_i - f_j) - 1.
\]
Here $\mbox{ind}$ denotes the Morse index.
This grading is well-defined as an element of $\Z /\{\mbox{Im} (\mu : H_1(L) \to \Z)\}$ where $\mu$ is the Maslov class, see \cite{EkholmEtnyreSullivan05a} for details.
If $c^+$ and $c^-$ lie in different components of $L,$ 
we modify the construction of $\gamma_c$ in a certain way to obtain a $\Z_2$ grading as done in Section 2.3 of \cite{EkholmEtnyreNgSullivan11}, for example.

Next we describe the graded algebra.
Fix a (possibly trivial) subgroup $G \subset H_1(L).$
Let $R$ be the group ring $\Z[G]$ or $\C[G]$ if $L$ is spin, and $\Z_2[G]$ otherwise.
Let $A(L)$ be the unital tensor algebra freely generated by the set
of Reeb chords, over the ring $R.$ For $A \in H_1(L)$ and
the (possibly empty) ordered word ${\mathbf b} :=b_1 \cdots b_m$ define the grading
$|A \mathbf{b}| = \mu(A) + \sum |b_i|.$

Finally we describe the differential.
Fix an almost complex structure $J$ compatible with the standard symplectic structure
on $T^\ast N.$ Choose a Reeb chord $a,$ a (possibly empty) word of chords
${\mathbf b}$ and a homology class $A \in H_1(L).$
Let $D \subset \C$ be the unit disk with $m+1$ marked boundary points $p_0 = 1,
p_1 = i, p_2 = -1, p_3, \ldots, p_{m}$ listed in counter-clockwise (positive) order.
Define the moduli space ${\M}_A(a;{\mathbf{b}})$ to be the set of $J$-holomorphic
disks  $u: D \to T^*M$ such that 
\[
u(\pa D) \subset \pi_{\mathbb C}(L), \,\,u(p_0) = \pi_{\mathbb C}(a), \,\,u(p_i) = \pi_{\mathbb C}(b_i), \,\,[\widehat{u(\pa D)}] = A,
\]
where $u|_{\pa D}$ near $p_0$ (resp. $p_i$ for $i \ge $1) in the positive direction maps from the lower (resp. upper) branch of $\pi_{\mathbb C}(L)$ at $\pi_{\mathbb C}(a)$ (resp. $\pi_{\mathbb C}(b_i)$)
to the upper (resp. lower) branch of $\pi_{\mathbb C}(L),$
and where $\widehat{u(\pa D)}$ is the image of the boundary, lifted to $L,$ and closed off 
at $a$ (resp. $b_i$) by the path $\gamma_a$  (resp. $\gamma_{b_i}^{-1}$).
In \cite{EkholmEtnyreSullivan05b, EkholmEtnyreSullivan05c}
it is shown that for generic $J$ or perturbation of $L$, ${\M}_A(a;\mathbf{b})$, modulo
conformal reparameterizations, is a compact (in the sense of Gromov) manifold of dimension $|a| - |A\mathbf{b}| - 1.$
If $L$ is spin, then ${\M}_A(a;\mathbf{b})$ can be equipped with a coherent (that is,
compatible with gluing) orientation \cite{EkholmEtnyreSullivan07}.
Define the differential $\pa: A(L) \to A(L)$ by
\begin{equation}\label{stddiff}
        \partial a =
        \sum_{\{u \in {\M}_A(a;{\mathbf b}) \,\,|\,\,
        |a|-|\mathbf{b}|-|A| -1=0\}}
        (-1)^{|a|+1}
        \sigma(u)  A {\mathbf b},
\end{equation}
where $\sigma(u) =\pm1 \in R$ is determined by the moduli
space orientation. The differential is then extended to all of
$A(L)$ by the graded product rule and linearity. 

In \cite{EkholmEtnyreSullivan05b, EkholmEtnyreSullivan05c, EkholmEtnyreSullivan07},
it is shown that this differential squares to zero, and the DGA is invariant under
stable-tame isomorphisms; thus the homology is invariant.
 See \cite{Chekanov02} for a definition of stable-tame isomorphisms.

\subsection{Gradient flow trees} \label{GFT.sec}
We briefly review the results of \cite{Ekholm07}, also reviewed in 
\cite[Section 2.4]{EkholmEtnyreNgSullivan11}, \cite[Section 2.3]{EkholmKalman08} and \cite[Section 2.2]{Rizell}.

\begin{dfn}
Pick a metric $g$ on $N.$ A {\em gradient flow tree} on $L \subset J^1(N)$ is an immersion  
$\phi: \Gamma \to N$ of a finite (not necessarily planar) tree $\Gamma$ with the following
data and conditions. 
\begin{itemize}
\item
At every edge $e_k$ of $\Gamma,$ $\phi$ parameterizes some gradient flow line
of $-\nabla(f_i - f_j)$ where the graphs of $f_i>f_j$ in $J^0(N)$ locally model
$\pi_F(L)$ as in Section \ref{LCA.sec}. Orient the two 1-jet lifts (under the projection $J^1(N) \to N$) $\phi_k^1$ and $\phi_k^2$ in $L$  of $\phi(e_k)$ according
to the vectors $-\nabla(f_i - f_j)$ and $+\nabla(f_i - f_j).$
\item
Every $l$-valent vertex $v$ comes equipped with a cyclic ordering of the edges
which we temporarily denote $e_1, \ldots, e_l, e_{l+1} =e_1.$
The Lagrangian projection of their 1-jet lifts has the following compatibility:
$\pi_{\mathbb C}(\phi^2_k(v)) = \pi_{\mathbb C}(\phi^1_{k+1}(v))$ where $v$ is viewed as a point
of both edges $e_k$ and $e_{k+1}.$ 
\item
Concatenating the Lagrangian projection of these 1-jet lifts in such a manner at each vertex produces a closed curve in $\pi_{\mathbb C}(L).$ 
\end{itemize}
\end{dfn}

If the $z$-coordinate of the 1-jet lift $\phi^2_k(v)$ is greater (resp. lesser) 
than the $z$-coordinate of $\phi^1_{k+1}(v),$ we say the flow tree has a
{\em positive} (resp. {\em negative}) {\em puncture} at the vertex $v.$
These punctures occur at Morse critical points of $f_i-f_j$ (identified with Reeb chords).

Proposition 3.14 of \cite{Ekholm07} establishes that, for generic perturbation of $L$ and choice of metric $g,$ the set of gradient flow trees with one positive puncture at $a$
and negative punctures at 
the (possibly empty) ordered word ${\mathbf b}$, is a
manifold whose dimension, as in Section \ref{LCA.sec}, is $
|a| - |A\mathbf{b}| - 1.$ 
Here $A \in H_1(L)$ is represented by the union of the 1-jet lifts of edges, closed off at $a$ with the capping path $\gamma_a$ and at $b_i$ with the inverse capping path $\gamma_{b_i}^{-1},$ as done in Section \ref{LCA.sec}.

We say a $J$-holomorphic disk or a gradient flow tree is {\em rigid} if its (expected)
dimension is 0.
Assume that $L$ is 1 or 2 dimensional, or that $\pi_F(L)$ has at most only cusp-edge
singularities.
Theorem 1.1 of \cite{Ekholm07} proves that for a generic perturbation of $L$
(see \cite[Section 2.2.1]{Ekholm07} for gradient flow behavior near cusp-edges),
there is a regular compatible almost complex structure $J$ such that the rigid
$J$-holomorphic disks are in one-to-one correspondence with the rigid gradient flow trees. Moreover, the boundary of each disk is $C^0$-close to the Lagrangian
projection of the 1-jet lifts of the corresponding gradient flow tree.

Rigid gradient flow trees of  Legendrians which satisfy the above hypotheses have a manageable set of local models.
By \cite[Remark 3.8]{Ekholm07}, the vertices must be 
\begin{itemize}
\item
valence 1, at a puncture or transverse to a cusp-edge;
\item 
valence 2, at a puncture or tangent to a cusp-edge; or,
\item 
valence 3, away from or transverse to a cusp-edge.
\end{itemize}
See \cite[Section 2.4]{EkholmEtnyreNgSullivan11} for figures of these. 

\subsection{Augmentations} \label{Augmentations.sec}

We recall some algebraic geometry needed to discuss augmentation varieties. Let $R$ be a finitely generated commutative  $\mathbb{C}$-algebra, and let $\Spec(R)$ be the set of all maximal ideals of $R.$ Consider the zero locus $Z(f)$ of an element $f\in R$ on $\Spec(R)$ in the following way: 
\[
Z(f)=\left\{m \,\, \left| \,\, m\in \Spec(R), f\in m \right\} \right. \subset \Spec(R).
\] 
Equivalently, the residue class of $f$ vanishes in the residue field $R/m.$ This definition immediately extends to ideals: 
\[Z(I)=\{m\,\,|\,\,m\in \Spec(R),I \subset m \} .
\] 
$\Spec(R)$ naturally carries the Zariski topology where all closed sets are of the form $A=Z(I)$ where $I\subset R$ is an ideal. 

If $R=\mathbb{C}[x_1,..,x_n]$, then the \emph{Nullstellensatz} tells us that every maximal ideal of $R$ is of the form 
\[
m_p = \langle x_1 - p_1, x_2 - p_2,\ldots,x_n - p_n\rangle=\ker(ev_p)
\] 
where $p\in\mathbb{C}^n$ and $ev_p: \mathbb{C}[x_1,..,x_n]\rightarrow \mathbb{C}$ is the evaluation homomorphism at $p.$ 
In this case, we can naturally identify $\Spec(R)$  with $\mathbb{C}^n.$ Under this identification, note that $f\in R$ vanishes in the residue field $R/m_p$ iff $f(p)=0.$ Hence, we see that $Z(I)$ is simply the zero locus in $\mathbb{C}^n$ of any collection of generators for $I.$  Recall that an ideal $J\subset R$ is said to be \emph{reduced} if $J=\mbox{Rad}(J)$ where 
\[
\mbox{Rad}(J)=\{f\in R| f^n \in J\ \,\, \mbox{for some}\,\,  n\in \mathbb{N}\}.
\]
The Nullstellensatz tells us that there is a natural bijection between closed 
(in the Zariski topology) sets in $\Spec(R)$ and reduced ideals in R. Moreover, $\Spec(R/J)$  is homeomorphic to $Z(J)\subset R.$ 

This relationship described above turns out to be functorial. Given a morphism 
$g:R\rightarrow S$ of finitely generated $\mathbb{C}$-algebras, there is a induced map $g^*:\Spec(S)\rightarrow \Spec(R)$ defined as follows. Given $m_p \in \Spec(S)$, let $\pi_{m_p}:S\rightarrow S/m_p$ be the canonical map, then $\pi_{m_p}\circ g$ is surjective since $g$ is a morphism of $\C$-algebras, so it has kernel $m_q \subset R$ where $m_q \in \Spec(R).$ We define $g^* (m_p)=m_q.$ For a complete exposition of the above discussion see \cite{Eisenbud}.

Given a DGA $(A,\partial)$ over a ring $R,$ an {\em augmentation} is a unital morphism $\epsilon: (A,\partial)\rightarrow (R,\pa = 0)$ satisfying the following property: first define a tame automorphism $\sigma_{\epsilon}:A \rightarrow A$ by $c \mapsto c+\epsilon (c),$ then the DGA $(A,\partial^\epsilon := \sigma_\epsilon \partial \sigma_\epsilon^{-1})$ satisfies the property that the only constant term (element of degree $0$) in the image of $\partial^\epsilon$ is $0\in A.$ If such an augmentation exists, we say that $(A,\partial)$ is {\em good}.

In general, augmentations allow us to consider a simplified homology theory. Given an augmentation $\epsilon:(A,\partial)\rightarrow (R,0)$, we may decompose the differential $\partial^\epsilon$ into $\oplus_{j=0}^{\infty} \partial_j^\epsilon$   where  $\partial_j^\epsilon(x)$ contains all words of length $j$ in the image of $\partial^\epsilon(x)$. Since $\partial_0^\epsilon=0$, it follows that $(\partial_1^\epsilon)^2 = 0$. The {\em linearized contact homology} is then defined to be the graded vector space $LCH(A,\partial,\epsilon)=\ker(\partial_1^\epsilon)/ \img(\partial_1^\epsilon) .$ The corresponding {\em Poincar\'e polynomial} (in the formal variable $t$) is defined to be 
\[
P_{A, \partial, \epsilon}(t) = \sum  \dim (LCH_i(A,\partial,\epsilon)) t^i .
\]

Let $L$ be a Legendrian submanifold which is spin and has vanishing Maslov class.
Let $(A,\partial)$
be the  Legendrian contact algebra
of   $L$ with coefficients in $\C[H_1(L)].$
Any $\mathbb{C}$-algebra morphism $p:\mathbb{C}H_1(L)\rightarrow \mathbb{C}$ extends canonically to a unital DGA map 
\[
p:(A,\partial) \rightarrow (pA,p\partial).
\]

\begin{dfn}
Let $X$ be the set of morphisms $p: \mathbb{C}H_1(L)\rightarrow \mathbb{C}$ such
that $(pA, p \partial)$ is good. 
$X$ induces a collection of {\em good points} $X'\subset \Spec(\mathbb{C}H_1(L))$. 
The {\em augmentation variety} $\Aug(\mathbb{C}H_1(L)\otimes A,\partial)$ is defined as the Zariski closure of $X'$ in $ \Spec(\mathbb{C}H_1(L))$.
\end{dfn}

See \cite{Ng08} for the original definition of, as well as applications of  the augmentation variety.


\section{Main result} \label{Main.sec}
In this section we prove Theorem \ref{VK.thm}, as well as state a generalization and a refinement.

\subsection{Proof of Theorem \ref{VK.thm}} \label{VKProof.sec}

Fix a metric on $N$ which induces one on $J^1(N)$ such that \cite[Theorem 1.1]{Ekholm07} holds. 

Recall we have a Legendrian $L \subset J^1(N)$ whose Reeb chords are
divided into two sets $S_1$ and $S_2.$
Let $h \subset N$ be a (possibly disconnected) hypersurface such that $N \setminus h$
is at least two components, and the projection $J^1(N) \to N$ of the chords
sends no two chords from different sets to the same component. 
Write $N = N_1 \cup N_2$ as the union of two manifolds with boundary 
such that $\pa N_1 = \pa N_2 \subset h$ and chords
in $S_i$ project to $N_i.$

Let $H  = \pi_F^{-1}(h)$ be the hypersurface in $J^0(N)$ which
divides $\pi_F(L)$ into two fronts $F_1$ and $F_2,$ sitting over $N_1$ and $N_2$ respectively, with boundary in $H.$
For generic choice of $h,$ we can assume $H$ intersects
$\pi_F(L)$ at its cusps transversely, and in the dimension $n=2$ case, avoids any 
dovetail singularities. 
(By hypothesis, for dimensions $n \ge 3$ there are no
other singularities.) For $i=1,2$ and $0 < \epsilon \ll 1,$ 
define the set $L_i$ open in $L$ such that $F_i \subset \pi_F(L_i)$ is a deformation retract of $\pi_F(L_i),$ and $\pi_F(L_i)$ is contained in (resp. contains)
a $2\epsilon/3$ (resp. $\epsilon/3$) neighborhood of $F_i \subset \pi_F(L).$ 

For a Reeb chord $c,$ its {\em action} $\A(c)$ is defined as the (positive) difference in $z$-coordinates of endpoints $c^+$ and $c^-.$
Fix $0 < \delta \ll \min_{c} \A(c)$ where the minimum is taken over all 
Reeb chords of $L.$ (Since $L$ is compact, we can assume all actions are of ``order 1.")
Choose a smooth even ``inverted bump" function $g = g_\delta: [-\epsilon, \epsilon] \to [\delta,1]$ with the following properties
\begin{itemize}
\item
$g(\pm \epsilon) = 1$ and $g$ has a unique critical point (minimum)
$g(0) = \delta;$  
\item
$\delta \le g|_{[-\epsilon/3,\epsilon/3]} \le 2 \delta;$
\item
$1 - \delta \le g|_{(-\epsilon,-2\epsilon/3] \cup [2\epsilon/3, \epsilon)} <1;$
\item
$g|_{[-2\epsilon/3,-\epsilon/3] \cup [\epsilon/3, 2\epsilon/3]}$ is linear.
\end{itemize}

Choose local coordinates of $(t,{\bf s}) \in[-\epsilon, \epsilon] \times h$ of a neighborhood of $h$ in $N$
such that the $(n-1 \ge 0)$ ${\bf s}$-coordinates are tangent to $h.$
Choose $\epsilon$ and $\delta$ such that for $\epsilon/3 < t < 2\epsilon/3$
$|g'(t)|$ is big enough to ensure
\[ 
\left| \frac{\pa}{\pa t} (g (f_i-f_j)) \right| > 0 
\]
outside of an $\min(\delta, \epsilon/3)$-neighborhood of a cusp-edge.

Define a base-preserving isotopy of $J^0(N) = N \times \R$ constant away from 
$[-\epsilon, \epsilon] \times H$ by
\begin{equation} \label{squeeze.eq}
\phi_\lambda(t,{\bf s},z) =  (t, {\bf s}, (1-\lambda)z + \lambda g(t) z)
\quad \mbox{for} \,\, 0 \le \lambda  \le 1.
\end{equation}
Note that this ambient isotopy of $J^0(N)$ induces a 
Legendrian isotopy $L^\lambda \subset J^1(N)$ by requiring 
$\pi_F(L^\lambda) = \phi_\lambda(\pi_F(L))$ and $L^0 =L.$
Furthermore, all new (possibly degenerate) Reeb chords must have $N$-coordinates
in $[-\epsilon, \epsilon] \times h,$ and be within $\epsilon/3$ of $h,$
a cusp-edge, or the boundary of support of the isotopy.

For technical reasons, we require the Legendrians to remain ``front generic" in the 
sense of \cite[Section 2.2.1]{Ekholm07}, which means the gradient flows
intersect cusps transversely. As defined $L^\lambda$ for $\lambda >0$ may 
have gradient flows vanish where any cusp intersects $H.$
To fix this, choose $0 < \delta' \ll \min(\delta, \epsilon/3),$ and deform $L^\lambda$
(continuously in $\lambda$) in a $\delta'$-neighborhood of any portion of
those cusps which lie over $[-\epsilon, \epsilon] \times h$ as described in
Figure \ref{CuspMin.fig}.

\begin{figure}
 \centering
  \includegraphics[height=2in]{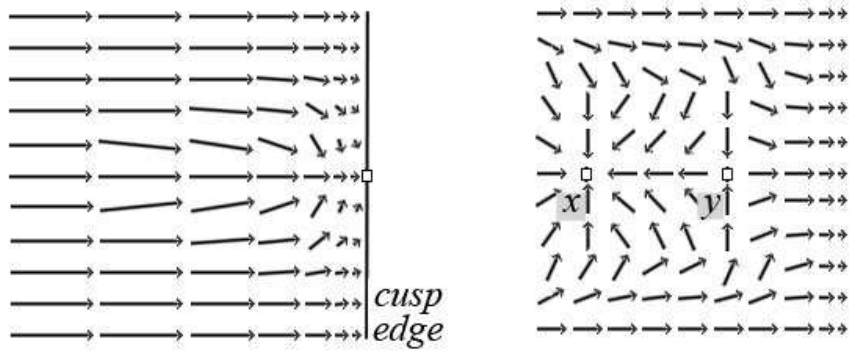}
\caption{This is drawn in the dimension $n=2$ case. Note how the boundary min
is in fact a minimum $x$ and a saddle $y.$ The vector field denotes $-\nabla (f_1 - f_2)$
where $f_1$ and $f_2$ correspond to the two sheets defining the cusp.}
\label{CuspMin.fig}
\end{figure}

The Reeb chords whose action is of order $\delta$ are exactly those whose $N$-coordinates lie in an $\epsilon/3$ neighborhood of $h \subset N.$ 
(This includes the new chords as drawn in Figure \ref{CuspMin.fig}.)
As mentioned above, there may also be chords  whose base coordinates lie outside a $2\epsilon/3$-neighborhood but inside an $\epsilon$-neighborhood of $h.$ 
Their actions are order 1.
The remaining set of chords have a canonical one-to-one correspondence with the chords of the original $L$ since the isotopy is fixed away from those points lying above $h.$

Since we want the Reeb chords to be transverse, we perturb our Legendrian
by a $C^2$-small isotopy of order $0 < \delta'' \ll \delta'.$ 
This does not create any new Reeb chords, nor change the actions
(up to order $\delta''$) of the chords that persist.
This Legendrian will be our $L'$ from the statement of the Theorem.
Define the open decomposition $L' = L_1' \cup L_2'$ as was done
for $L = L_1 \cup L_2$ above. 
To be safe, we will also include to both $L_1'$ and $L_2'$
an $\epsilon/3$-neighborhood of any portion of cusp edges 
in the $[-\epsilon, \epsilon] \times h$ region.

For $i=1,2,$ let $A_i$ denote the algebras generated by the chords of $L_i'.$
Let $A_3$ denote the algebra generated by any chord with action bounded
by order $\delta.$ Since the differential decreases the action of the generators
(by Stokes' Theorem), $A_3$ is a sub-DGA of $A(L'),$ as well as of $A_1$ and $A_2.$ 

It suffices to show that $A_1$ and $A_2$ are also sub-DGAs of $A(L').$
Suppose $a$ is a chord of $L_1'$ and $\phi : \Gamma \to N$ is a rigid flow tree which intersects $h$ at $p \in \Gamma,$ contributing the word $w;$ that is, 
$\pa  a = w + \ldots.$  
The one-jet lifts, $\phi^1(p)$ and $\phi^2(p),$ of the flow at that point have
$z$-coordinates which differ by order $\delta.$ Since the difference in $z$-coordinates
must decrease and stay positive, and since $\pi_F(L')$ is front-generic in the
sense of Figure \ref{CuspMin.fig}, the flow cannot leave the $\epsilon/3$-neighborhood
of $h$ or the cusp-edge in $[-\epsilon, \epsilon] \times h.$ In particular, it contributes a (possibly constant) word in $A_3$ to $w.$

\subsection{A generalization and a refinement}

We state without proof a generalization of Theorem \ref{VK.thm}.
This is a direct analogy to \cite[Theorem 2.20]{Sivek11}, and the proof is a slight
modification of the one in Section \ref{VKProof.sec}.

Let $L_1,L_2,L_3 \subset L $ be three $n$-dimensional submanifolds with boundary such that $L_1\cup L_2\cup L_3 =L.$ 
Assume that $L_1\cap L_2$ is empty and that 
$\partial (L_3) = \partial(L_1) \cup \partial(L_2).$
Apply the construction of Section \ref{VKProof.sec} locally along the boundary of $L_3$ to obtain the following generalization of Theorem \ref{VK.thm}.
Let $A_3, A_{13}, A_{23}, A$ be the Legendrian contact algebra of 
$L_3', L_1' \cup L_3', L_2' \cup L_3', L_1' \cup L_2' \cup L_3'$
respectively.
\begin{thm} \label{VKmultiple.thm}
The following is a push-out square
\[
\xymatrix{
A_3 \ar[d] \ar[r] &A_{13} \ar[d]\\
A_{23} \ar[r] &A }
\]
\end{thm}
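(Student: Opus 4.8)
The plan is to reduce Theorem \ref{VKmultiple.thm} to Theorem \ref{VK.thm} by iterating the squeezing construction of Section \ref{VKProof.sec} along the two disjoint pieces of the dividing hypersurface, and then verifying the universal property of a pushout directly at the level of DGAs. First I would apply the construction of Section \ref{VKProof.sec} simultaneously in disjoint collar neighborhoods of the two components of $\partial L_3$, one separating $L_1$ from $L_3$ and the other separating $L_2$ from $L_3$ (this is possible because $L_1 \cap L_2 = \emptyset$, so these collars can be chosen disjoint). As in Section \ref{VKProof.sec}, this produces a Legendrian isotopic $L'$ with thickened pieces $L_1', L_2', L_3'$, together with the "small-action" sub-DGAs attached to each dividing hypersurface. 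The key geometric input, exactly as in the proof of Theorem \ref{VK.thm}, is that a rigid gradient flow tree with positive puncture on a chord of $L_i'$ (for $i = 1,2$) which reaches one of the dividing hypersurfaces must have the portion beyond that hypersurface confined to an $\epsilon/3$-neighborhood of it, hence contributes a word lying in $A_3$; this is a purely local statement near each hypersurface and so follows verbatim from the argument already given, applied twice.

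Granting that, the four subalgebras $A_3 \subset A_{13}$, $A_3 \subset A_{23}$, $A_{13} \subset A$, $A_{23} \subset A$ are all sub-DGA inclusions, and the square commutes on the nose since all maps are inclusions of sub-DGAs of $A$. It remains to check it is a pushout. Algebraically, $A = A(L')$ is the free $R$-algebra on the Reeb chords of $L'$, which decompose into: chords of $L_3'$ (generating $A_3$); chords of $L_1'$ not in $L_3'$; and chords of $L_2'$ not in $L_3'$ — these three sets are disjoint because $L_1' \cap L_2'$ lies in the region handled by $L_3'$. Hence as a graded algebra $A$ is the free product $A_{13} \ast_{A_3} A_{23}$ amalgamated over $A_3$. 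So given any DGA $B$ with DGA maps $f_{13}: A_{13} \to B$ and $f_{23}: A_{23} \to B$ agreeing on $A_3$, there is a unique graded-algebra map $A \to B$ restricting correctly; one then checks it is a chain map by checking on generators, which reduces to the chain-map property of $f_{13}$ and $f_{23}$ together with the fact (from the previous paragraph) that the differential of a chord in $L_i' \setminus L_3'$ only involves chords in $A_{i3}$.

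The main obstacle I expect is bookkeeping rather than conceptual: one must arrange the two squeezing isotopies and the associated constants $\epsilon, \delta, \delta', \delta''$ so that the two collar regions genuinely do not interact — in particular so that no rigid flow tree crosses both dividing hypersurfaces in a way that escapes the controlled neighborhoods — and so that the decomposition of the chords of $L'$ into the three advertised disjoint sets is actually exhaustive (every newly created chord is assigned to exactly one of $A_3$, $A_{13}\setminus A_3$, $A_{23}\setminus A_3$). Since $\partial L_3 = \partial L_1 \cup \partial L_2$ and $L_1 \cap L_2 = \emptyset$, the two collars are disjoint and this separation is clean, but the argument that a flow tree near one hypersurface cannot "see" the other is the point that needs the genericity and action estimates of Section \ref{VKProof.sec} repeated carefully. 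This is precisely the slight modification alluded to in the statement, so I would present it by pointing to the corresponding step of the proof of Theorem \ref{VK.thm} and indicating the disjointness that makes the two applications independent, rather than rewriting the estimates.
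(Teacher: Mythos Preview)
Your proposal is correct and follows exactly the approach the paper indicates: the paper explicitly states the theorem without proof, saying only that ``the proof is a slight modification of the one in Section~\ref{VKProof.sec}'' obtained by applying ``the construction of Section~\ref{VKProof.sec} locally along the boundary of $L_3$,'' which is precisely what you do by squeezing along the two disjoint components of $\partial L_3$ and then reading off the free-product structure of $A$ over $A_3$. Your identification of the bookkeeping issue (disjointness of the two collar regions so that the two applications of the action estimate are independent) is the right thing to flag, and your decision to handle it by pointing to the corresponding step in the proof of Theorem~\ref{VK.thm} plus the hypothesis $L_1 \cap L_2 = \emptyset$ matches the level of detail the paper intends.
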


\begin{rmk} \label{VKrefined.rem}
The argument in Section \ref{VKProof.sec} can be made with more care
to control the number of new Reeb chords which appear. Specifically,
consider the intersection of the dividing hypersurface $H \subset J^0(N)$
with the Legendrian front $\pi_F(L)$ as a graph over a $(n-1)$-manifold $h.$
Each critical point of the difference function of pairs of sheets of $h$ leads
to two new Reeb chords (of relative index 1) of our modified Legendrian.
Furthermore, each cusp edge leads to new Reeb chords (of relative index 1)
as shown in Figure \ref{CuspMin.fig}.
To prove this, however, requires some manipulations of the gradient flows, and
we have no application of this refinement.
\end{rmk}


\section{Applications and computations} \label{Applications.sec}

\subsection{Mayer-Vietoris and characteristic algebra}
With Theorem \ref{VK.thm} in hand, it is natural to ask about its consequences on the standard constructions of Legendrian contact homology. Following Sivek \cite{Sivek11}, we show that Theorem \ref{VK.thm} induces a Mayer-Vietoris sequence on linearized contact homology. Another standard functorial construction, the characteristic algebra behaves even more nicely, in the sense that it preserves the pushout square.

By Theorem \ref{VK.thm}, we have the push-out square

\begin{equation}
\label{PushOutAugmentation.eq}
\xymatrix{
&A_1\ar@{.>}[dr]\ar[drr]^{\epsilon_1} && \\
A_3\ar[ur]\ar[dr] &&A\ar[r]|\epsilon&\mathbb{C} \\
&A_2\ar@{.>}[ur]\ar[urr]_{\epsilon_2}&&
}
\end{equation}
where the unmarked arrows are inclusion maps. So any augmentation of $A$ induces augmentations $\epsilon_i$ on $A_i.$  Conversely, given any augmentations $\epsilon_1$ and $\epsilon_2$ commuting as above (hence which define 
$\epsilon_3: A_3 \to \C$), we have an induced augmentation on $A$ by the push-out property. 

\begin{prop} \label{LCH.prop}

There is a long exact sequence
\begin{eqnarray*}
&\ldots \rightarrow LCH_*(A_3,\partial,\epsilon_3)\rightarrow LCH_*(A_1,\partial,\epsilon_1)\oplus LCH_*(A_2,\partial,\epsilon_2) \rightarrow &
\\
&LCH_*(A,\partial,\epsilon)\rightarrow \ldots &
\end{eqnarray*}
of linearized homology groups.
\end{prop}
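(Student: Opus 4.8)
The plan is to realize the Mayer--Vietoris sequence as the long exact sequence in homology of a short exact sequence of linearized complexes extracted from the pushout square of Theorem~\ref{VK.thm}. Recall that for a DGA $(B,\partial)$ over $R$ freely generated by a set of Reeb chords and equipped with an augmentation $\epsilon$, the linearized homology $LCH_*(B,\partial,\epsilon)$ is by definition the homology of the chain complex $(V(B),\partial_1^\epsilon)$, where $V(B)$ is the free $R$-module on the Reeb chords and $\partial_1^\epsilon$ is the length-one part of $\partial^\epsilon=\sigma_\epsilon\partial\sigma_\epsilon^{-1}$; so the statement is really about these complexes.

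The first step is to pin down the generating modules. By the construction of Section~\ref{VKProof.sec}, the Reeb chords of $L'$ are the chords of $L_1'$ together with the chords of $L_2'$, their common chords being precisely the small-action chords that generate $A_3$; and each of the four inclusions $A_3\hookrightarrow A_1$, $A_3\hookrightarrow A_2$, $A_1\hookrightarrow A$, $A_2\hookrightarrow A$ is an inclusion of a sub-DGA generated by a subset of these chords, hence sends Reeb chords to Reeb chords. Consequently there is a short exact sequence of free $R$-modules
\[
0\longrightarrow V(A_3)\xrightarrow{(\iota_1,-\iota_2)} V(A_1)\oplus V(A_2)\xrightarrow{j_1+j_2} V(A)\longrightarrow 0,
\]
where $\iota_1,\iota_2,j_1,j_2$ are the inclusions on generators and exactness is immediate on the chord basis. (Equivalently this is forced by Theorem~\ref{VK.thm}, since the underlying graded algebra of a pushout of DGAs is the pushout of the underlying graded algebras, and an amalgamated free product of free algebras along generator-preserving maps is free on the pushout of the generating modules.)

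Next I would check that this is a short exact sequence of \emph{chain complexes} for the linearized differentials. For each inclusion $\iota\colon A_k\to A_\ell$ above, the augmentations are compatible, $\epsilon_\ell\circ\iota=\epsilon_k$, as noted after Theorem~\ref{VK.thm}, so $\iota\circ\sigma_{\epsilon_k}=\sigma_{\epsilon_\ell}\circ\iota$ (both sides are algebra maps agreeing on generators); hence $\iota$ commutes with the conjugated differentials $\partial^\epsilon$, and since it preserves word length it also commutes with passage to the length-one part, giving $\iota\circ\partial_1^{\epsilon_k}=\partial_1^{\epsilon_\ell}\circ\iota$. Equivalently, since each $A_k$ is a sub-DGA with $\epsilon_k=\epsilon|_{A_k}$, the complex $(V(A_k),\partial_1^{\epsilon_k})$ is simply a subcomplex of $(V(A),\partial_1^\epsilon)$. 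Thus the displayed sequence is a short exact sequence of complexes over $R$, and the associated long exact sequence in homology is exactly the asserted Mayer--Vietoris sequence, with connecting homomorphism $LCH_n(A,\partial,\epsilon)\to LCH_{n-1}(A_3,\partial,\epsilon_3)$ because $\partial_1$ lowers degree by one.

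The only step requiring genuine care is the identification of the generating modules --- that the Reeb chords of $L'$ really split as a pushout of sets along the generators of $A_3$, with no order-one chord lying in both $L_1'$ and $L_2'$. This is built into the construction of Section~\ref{VKProof.sec}, where one tracks the order-one chords produced near the dividing hypersurface $h$ and the chords produced along cusp edges (Figure~\ref{CuspMin.fig} and Remark~\ref{VKrefined.rem}). Once that bookkeeping is in place the rest is standard homological algebra, which is why, as remarked in the text, this Mayer--Vietoris sequence is easier than its analogue in \cite{Sivek11}.
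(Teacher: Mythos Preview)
Your proposal is correct and follows the same underlying idea as the paper: the generator decomposition $V(A)=V(A_1)\cup_{V(A_3)}V(A_2)$ gives a short exact sequence of linearized chain complexes, from which the long exact sequence follows. The paper's proof is considerably terser --- it only writes down the maps $f(x)=(x,-x)$ and $g(x,y)=x+y$ and verifies exactness at the middle term by the observation that a generator lying in both $A_1$ and $A_2$ lies in $A_3$ --- whereas you spell out the chain-level short exact sequence, check that the inclusions intertwine the augmented differentials (hence their length-one parts), and invoke the long exact sequence in homology; your treatment of the ``only step requiring genuine care'' (that no order-one chord lies in both $L_1'$ and $L_2'$) is exactly the content the paper encodes in the sentence about generators.
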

\begin{proof} Define
\begin{eqnarray*}
f: LCH(A_3,\partial,\epsilon_3)\rightarrow LCH(A_1,\partial,\epsilon_1)\oplus LCH(A_2,\partial,\epsilon_2), & & f(x) = (x,-x), \\
g: LCH(A_1,\partial,\epsilon_1)\oplus LCH(A_2,\partial,\epsilon_2) \rightarrow LCH(A_3,\partial,\epsilon_3),
& & g(x,y) = x+y.
\end{eqnarray*}
Clearly $g \circ f = 0$, so $\img(f) \subset \ker(g).$
To see that $\ker(g) \subset \img(f)$, it suffices to note that if $(x,y)\in\ker(g)$, then $x=-y$, so
$x\in LCH(A_1,\partial,\epsilon_1)$ and $x\in LCH(A_2,\partial,\epsilon_2).$
By the description of the generators preceding Theorem \ref{VK.thm}, it is clear that any generator contained in both $A_1 $ and $A_2$ must also be contained in $A_3$, whence we have
$x\in LCH(A_3,\partial,\epsilon_3).$
\end{proof}

Ng \cite{Ng03} defines the characteristic algebra for the Chekanov-Eliashberg algebra
of Legendrian knots, but his construction-definition extends to the Legendrian
contact algebra of higher dimensional Legendrian since the invariance
proof of the theory uses the same set of stable-tame isomorphisms \cite{EkholmEtnyreSullivan05b}.

\begin{dfn}\label{CA.dfn}
Let $(A,\partial)$ be a DGA. 
The {\em characteristic algebra} $C(A,\partial)$ is defined as $A/I$ where $I$ is the two-sided ideal generated by $\partial(A).$
\end{dfn}

Given a morphism $f:(A,\partial)\rightarrow(B,\partial')$ of DGAs, we see that $f\partial(A)=\partial' f(A) \subset\partial ' (B)$. So, $f$ induces a morphism $C(A)\rightarrow C(B)$. Thus, we see that $C$ is a functor from the category of DGAs to the category of unital associative graded algebras.

\begin{prop} \label{CAFunctor.prop}

\cite{Sivek11} The functor C preserves pushouts.

\end{prop}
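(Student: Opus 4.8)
The plan is to verify the universal property of the pushout directly, transferring it from the category of DGAs (where Theorem~\ref{VK.thm} supplies the pushout square with vertices $A_3, A_1, A_2, A(L')$) to the category of unital associative graded algebras via the functor $C$. Concretely, I would write $B_i = C(A_i)$ and $B = C(A(L'))$, together with the induced algebra maps $B_3 \to B_1$, $B_3 \to B_2$, $B_1 \to B$, $B_2 \to B$ forming a commutative square; the goal is to show this square is a pushout. The first step is to unwind the definition: $B = A(L')/\langle \pa A(L')\rangle$ as an algebra, and the pushout of $B_1 \leftarrow B_3 \rightarrow B_2$ in graded algebras is $(B_1 * B_2)/J$, where $*$ is the coproduct (free product amalgamated over the common ground ring $R$) and $J$ is the two-sided ideal generated by the relations $\iota_1(b) = \iota_2(b)$ for $b \in B_3$. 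So I must produce mutually inverse algebra maps between $B$ and this amalgamated quotient.

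The key observation is that the analogous statement for plain (unital associative graded) algebras already holds: if the DGA square is a pushout, then applying the forgetful-type quotient, $A(L') \cong (A_1 * A_2)/K$ where $K$ is generated by $A_3$-relations — this is what ``pushout in DGAs'' gives at the level of underlying algebras, since the free product of DGAs has underlying algebra the free product of the underlying algebras, and the coequalizer is taken by imposing the same relations. Granting that, one computes
\[
C(A(L')) = A(L')/\langle \pa A(L')\rangle \cong \bigl((A_1 * A_2)/K\bigr)\big/\langle \pa A(L')\rangle,
\]
and the point is that $\langle \pa A(L')\rangle$, pushed into this quotient, is generated by the images of $\pa A_1$ and $\pa A_2$: indeed every generator $a$ of $A(L')$ lies in $A_1$ or in $A_2$ (this is exactly the generator description preceding Theorem~\ref{VK.thm}, also used in Proposition~\ref{LCH.prop}), so $\pa a$ already lies in $\pa A_1$ or $\pa A_2$. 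Therefore the iterated quotient collapses to $(B_1 * B_2)/J$, which is precisely the pushout of $B_1 \leftarrow B_3 \rightarrow B_2$; one then checks the two comparison maps are inverse to each other, which is routine once the ideals are identified.

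Alternatively — and this may be cleaner to write — I would verify the universal property abstractly using that $C$ is a functor with a right adjoint-like behavior on morphisms to \emph{discrete} DGAs: a morphism $B = C(A,\pa) \to S$ of graded algebras is the same as a morphism of graded algebras $A \to S$ killing $\pa A$, which is the same as a DGA morphism $(A,\pa) \to (S, 0)$. So $\mathrm{Hom}_{\mathrm{Alg}}(C(A,\pa), S) \cong \mathrm{Hom}_{\mathrm{DGA}}((A,\pa),(S,0))$ naturally in $S$. Given algebra maps $B_1 \to S$ and $B_2 \to S$ agreeing on $B_3$, these correspond to DGA maps $A_1 \to (S,0)$ and $A_2 \to (S,0)$ agreeing on $A_3$; by the pushout property of Theorem~\ref{VK.thm} these glue to a unique DGA map $A(L') \to (S,0)$, which corresponds to a unique algebra map $B \to S$ restricting correctly. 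Uniqueness transfers along the same bijection. The main obstacle in either approach is bookkeeping: making sure that the ``relations'' ideal in the amalgamated free product is exactly identified with the two-sided ideal $\langle \pa A(L')\rangle$ after quotienting, i.e. that no extra relations sneak in and none are lost — the generator-localization fact (each Reeb chord of $L'$ belongs to $A_1$ or $A_2$) is what makes this go through, and it should be invoked explicitly.
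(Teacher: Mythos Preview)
The paper does not give a proof of this proposition; it simply attributes the result to \cite{Sivek11}. Your argument is correct, and your second approach is the cleanest way to see it: the bijection $\mathrm{Hom}_{\mathrm{Alg}}(C(A,\partial), S) \cong \mathrm{Hom}_{\mathrm{DGA}}((A,\partial),(S,0))$ exhibits $C$ as left adjoint to the functor sending a graded algebra $S$ to the DGA $(S,0)$, and left adjoints preserve all colimits, in particular pushouts. (A minor terminological slip: you call this ``right adjoint-like behavior,'' but the displayed bijection makes $C$ the \emph{left} adjoint; the substance is unaffected.) Your first, more hands-on approach also works, but note that the statement is purely algebraic and does not depend on the Legendrian generator description: in any DGA pushout $A = A_1 *_{A_3} A_2$, the ideal $\langle \partial A\rangle$ is already generated by $\partial A_1 \cup \partial A_2$ because $A$ is generated as an algebra by $A_1 \cup A_2$ and $\partial$ restricts to $\partial_i$ on each $A_i$. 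So the appeal to ``each Reeb chord of $L'$ lies in $A_1$ or $A_2$'' is unnecessary here.
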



\subsection{Poincar\'e Polynomials, Augmentation varieties and cusp connected sums}

In this subsection we prove a formula computing the augmentation variety of a certain connect sums of Legendrian submanifolds, and speculate on the corresponding formula for the Poincar\'e polynomial.

Recall that if $L$ is a compact orientable manifold with $\dim(L)\geq2$ and $L=L_1\#L_2$ is the connect-sum of
two closed manifolds $L_1,L_2,$ then $H_1(L) \cong H_1(L_1) \oplus H_1(L_2).$ Thus, $\C[H_1(L)]\cong\C[H_1(L_1)]\otimes_{\C} \C[H_1(L_2)]$ under the obvious identification and we have a push-out square of commutative $\C$-algebras:
\begin{equation}
\label{PushOutCoef.eq}
\xymatrix{
\C \ar[d] \ar[r] &\C[H_1(L_1)] \ar[d]^{\rho_1}\\
\C[H_1(L_2)] \ar[r]^{\rho_2} &\C[H_1(L)] }
\end{equation}

We recall a construction from \cite[Section 4.2]{EkholmEtnyreSullivan05a}.
Given two Legendrian submanifolds $L_1, L_2 \subset J^1(N),$ their cusp connected sum
$L_1 \# L_2,$ can be (non-uniquely) defined by first applying a Legendrian isotopy (if it exists) so that the fronts $\pi_F(L_1),\pi_F(L_2)$ are separated by a hyperplane in $N \times \R$ containing the $z$-direction. 
Let $c \subset J^1(N)$ be an arc beginning at a cusp-edge of $L_1,$ ending at a cusp-edge of $L_2,$ and parameterized by 
$s \in [-1,1].$ 
Choose a neighborhood $M$ of $c$ whose vertical cross sections consist of round balls whose radii vary with $s$ and have exactly one minimum at $s=0$ and no other critical points. Cusp off the region $M$ appropriately (see \cite[Figure 5]{EkholmEtnyreSullivan05a}) and define the connect sum $L_1 \# L_2$ as the union of $L_1\setminus (L_1\cap M),L_2 \setminus (L_2\cap M)$ and $\partial M.$ 
Note that the above definition depends on the choices made.

\begin{lma}[\cite{EkholmEtnyreSullivan05a}]\label{CS.lma}
Assume that $L_1,L_2,L = L_1 \# L_2$ are spin and $\dim(L)\geq 2.$
Let $(A, \pa)$ and $(A_{\mathbb{C}H_1(L)}, \pa_L)$ with grading $| \cdot|$ denote
the Legendrian contact algebras of $L$ with coefficients in
$\C$ and $\C[H_1(L)],$ respectively.
For $i = 1,2,$ let
$(A_i, \pa_i)$ and $(A_{\mathbb{C}H_1(L_i)}, \pa_{L_i})$ with grading $| \cdot|_i$ denote
the Legendrian contact algebras of $L_i$ with coefficients in
$\C$ and $\C[H_1(L_i)],$ respectively.

The connected sum construction can be set up such that $A$ (and hence
$A_{\mathbb{C} H_1(L)}$) is generated by the generators of $A_1$, $A_2$ 
and a single Reeb chord $h.$ 
Let $\rho_i: A_{\mathbb{C}H_1(L_i)} \rightarrow A_{\mathbb{C}H_1(L)}$ be the unique extension of the algebra morphism 
 $\rho_i: \C[H_1(L_i)] \rightarrow \C[H_1(L)]$ from the push-out square (\ref{PushOutCoef.eq}).

The connect sum construction can be set up such that the following holds.
\begin{enumerate}
\item
If $c\in A_i$ then $|c|_{i}=|c|$, and both $\partial(c)- \partial_{i}c$ and $\partial_L(c) - \rho_i \partial_{L_i}(c)$ are divisible by $h.$
\item
$|h|=n-1$ and $\partial_{\cdot}(h)=0$ for any of the differentials $\partial_{\cdot}.$
\end{enumerate}

\end{lma}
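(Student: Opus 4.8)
The proof is essentially that of \cite[Section 4.2]{EkholmEtnyreSullivan05a}; the plan is to recall its main points, phrasing the $J$-holomorphic disk count in terms of the gradient flow trees of Section \ref{GFT.sec} wherever that streamlines the bookkeeping. Recall first that outside the neck neighborhood $M$ of the connecting arc $c$, the Legendrian $L = L_1 \# L_2$ coincides with $L_1 \sqcup L_2$, so all new structure is concentrated in $\partial M$; after the cusping-off of \cite[Figure 5]{EkholmEtnyreSullivan05a}, the front $\pi_F(\partial M)$ restricted to $M$ consists of a pair of sheets whose difference function is modeled on the radius function of $M$, which by construction (round cross-sections with a single minimal radius at $s=0$) has a single critical point. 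That critical point is the unique new Reeb chord $h$; its action is the minimal neck width, which we are free to choose smaller than $\A(c')$ for every Reeb chord $c'$ of $L_1$ and $L_2$. A Morse-index computation in this model, with a capping path for $h$ passing once through the newly created cusp region, gives $|h| = n-1$; this is the first assertion of (2).

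For the second assertion of (2): since the differential strictly decreases action by Stokes' theorem, any rigid flow tree (equivalently, by \cite[Theorem 1.1]{Ekholm07}, any rigid $J$-holomorphic disk) with positive puncture at $h$ would have all of its negative punctures at chords of action less than $\A(h)$, of which there are none, so such a tree has no negative punctures at all. The explicit analysis of the thin neck in \cite[Section 4.2]{EkholmEtnyreSullivan05a} then shows no such rigid tree exists, so $\pa_{\cdot}(h) = 0$ over $\mathbb C$, and hence (as $h$ bounds no disk at all) over $\mathbb C[H_1(L)]$ as well. Turning to (1), a capping path for $c \in A_i$ can be chosen inside the deformation retract $\pi_F(L_i)$ and so disjoint from $M$, whence the cusp count and Morse index defining $|c|$ coincide with those computed in $L_i$, that is, $|c|_i = |c|$; when $\dim L \ge 2$ the splitting $H_1(L) \cong H_1(L_1) \oplus H_1(L_2)$, with $\rho_i$ the induced inclusion of group rings, makes these gradings compatible over the respective coefficient rings.

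The divisibility statements are the substantive part. I would classify the rigid flow trees $\phi \colon \Gamma \to N$ with positive puncture at $c \in A_i$. If $\phi(\Gamma) \cap M = \emptyset$, then $\phi$ is a rigid flow tree of $L_i$ and contributes exactly a monomial of $\pa_i c$ --- respectively, using that an $L_i$-tree represents a class in $H_1(L_i) \subset H_1(L)$, a monomial of $\rho_i \pa_{L_i} c$. If instead $\phi(\Gamma)$ meets $M$, then to connect the $L_1$- and $L_2$-parts of $L$ the tree must traverse the neck, and because the difference function there has a single non-degenerate critical point, front-genericity together with the positivity and monotonicity of the $z$-gap force $\phi$ to acquire a negative puncture at $h$ inside $M$; its contribution to $\pa c$ (respectively $\pa_L c$) is then a monomial containing the letter $h$. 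Summing over all such trees yields that $\pa c - \pa_i c$ and $\pa_L c - \rho_i \pa_{L_i} c$ lie in the two-sided ideal generated by $h$. I expect the main obstacle to be precisely this localization in the second case --- ruling out a rigid tree that enters and exits $M$ without a puncture at $h$. This is the neighborhood-of-$M$ analysis of \cite[Section 4.2]{EkholmEtnyreSullivan05a}, and recasting it in flow-tree terms is what makes it tractable: in the thin neck the local models of Section \ref{GFT.sec} (vertices of valence at most three, punctures only at critical points) leave essentially no freedom, so a short compactness-and-monotonicity argument closes it out.
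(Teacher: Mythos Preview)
Your approach is essentially the paper's own. The paper does not prove the lemma in detail; it cites \cite{EkholmEtnyreSullivan05a} and adds a one-line remark that the gradient-flow-tree reformulation gives an alternative route: \emph{choose the (generic) arc connecting the cusp-edges of $L_1$ and $L_2$ to be away from all rigid gradient flow trees which define their differentials}. That genericity is the one ingredient you should make explicit. Since there are only finitely many rigid trees computing $\partial_1$ and $\partial_2$, the arc $c$ (and hence the neck neighborhood $M$) can be taken disjoint from all of them. This immediately supplies the direction you gloss over---that every monomial of $\partial_i c$ (respectively $\rho_i\partial_{L_i}c$) actually survives as a monomial of $\partial c$ (respectively $\partial_L c$)---because the corresponding $L_i$-trees are untouched by the surgery. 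Without this choice your sentence ``contributes exactly a monomial of $\partial_i c$'' only establishes an inclusion, not an equality, of the $h$-free parts.

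The same generic choice also dissolves what you flag as the ``main obstacle.'' Once every old rigid tree avoids $M$, any rigid $L$-tree meeting $M$ is genuinely new and must have an edge inside the neck. There the front has only the two sheets of $\partial M$, so no valence-three vertex is possible, and the single local difference function has $h$ as its unique (minimum) critical point; the negative gradient flows toward $h$ from both ends of the tube, so any such edge terminates at $h$ as a valence-one negative puncture. No further compactness-and-monotonicity argument is required.
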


\begin{rmk}
In \cite{EkholmEtnyreSullivan05a}, the above lemma is proved with coefficients in $\Z_2$, but the proof generalizes to these coefficients.
Although statement (1) is originally stated in a slightly weaker version
(even ignoring the extended coefficients), the proof in 
\cite{EkholmEtnyreSullivan05a} does imply this stronger reformulation.
Moreover, Lemma \ref{CS.lma} also follows from the gradient flow tree approach
of this paper: choose the (generic) arc connecting the cusp-edges of $L_1$ and $L_2$ to be away
from all rigid gradient flow trees which define their differentials.
\end{rmk}

\begin{lma}
\label{Augmentation.lma}
There is a canonical correspondence between the set of augmentations $\epsilon$ of $(A, \pa)$ and
the set of pairs of augmentations $(\epsilon_1, \epsilon_2)$ of
$(A_1, \pa_1)$ and $(A_2, \pa_2).$
\end{lma}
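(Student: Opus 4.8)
The plan is to deduce Lemma~\ref{Augmentation.lma} from Lemma~\ref{CS.lma} together with the structural description of $(A,\pa)$ as being generated by the generators of $A_1$, $A_2$, and the single chord $h$ with $\pa h = 0$. First I would observe that since $\pa h = 0$ and $|h| = n-1 \ge 1$, the chord $h$ plays no role in the constant terms that an augmentation must kill; an augmentation $\epsilon: (A,\pa) \to (\C, 0)$ is a $\C$-algebra map with $\epsilon \pa = 0$, and up to the tame change of coordinates $\sigma_\epsilon$ the relevant condition is that $\pa^\epsilon$ has no constant terms in its image. Given $\epsilon$, I would define $\epsilon_i := \epsilon|_{A_i}$ (using the inclusion of generators $A_i \hookrightarrow A$ as graded algebras). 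To check $\epsilon_i$ is an augmentation of $(A_i,\pa_i)$, I would use Lemma~\ref{CS.lma}(1): for a generator $c \in A_i$ we have $\pa(c) - \pa_i(c)$ divisible by $h$. Applying $\epsilon$, and using $\epsilon(h) = 0$ (forced because $|h| = n-1 \ge 1$ so $h$ is not in degree $0$; an augmentation vanishes on all generators of nonzero degree), every word divisible by $h$ maps to $0$ under $\epsilon$, so $\epsilon(\pa c) = \epsilon(\pa_i c) = \epsilon_i(\pa_i c)$, and the left side is $0$; hence $\epsilon_i \pa_i = 0$ on generators, thus on all of $A_i$. The same divisibility-by-$h$ argument shows the constant-term condition after the tame coordinate change transfers correctly.

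Conversely, given a pair $(\epsilon_1, \epsilon_2)$, I would define $\epsilon: A \to \C$ on generators by $\epsilon(c) = \epsilon_i(c)$ for $c \in A_i$ and $\epsilon(h) = 0$, extended as a $\C$-algebra map; this is well-defined since the generating sets of $A_1$, $A_2$ and $\{h\}$ are disjoint by the connect-sum normalization in Lemma~\ref{CS.lma}. To verify $\epsilon \pa = 0$, it suffices to check it on generators: on $h$ we have $\pa h = 0$; on $c \in A_i$ we have $\pa c = \pa_i c + (\text{terms divisible by } h)$, so $\epsilon(\pa c) = \epsilon_i(\pa_i c) + 0 = 0$. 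One then checks the constant-term/tame-automorphism formulation of ``augmentation'' is equivalent here, again because the discrepancy terms are all divisible by $h$ and $h$ is in positive degree. Finally I would note these two constructions are mutually inverse: starting from $\epsilon$, restricting and then re-assembling returns $\epsilon$ because $\epsilon(h)$ was already forced to be $0$; starting from $(\epsilon_1,\epsilon_2)$, assembling and then restricting returns the original pair by construction. This establishes the canonical bijection.

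The main obstacle I anticipate is bookkeeping around the precise definition of augmentation used in the paper (the tame automorphism $\sigma_\epsilon$ and the ``only constant term in the image of $\pa^\epsilon$ is $0$'' condition), as opposed to the naive condition $\epsilon \pa = 0$. I would need to check carefully that the equivalence ``$(pA,p\pa)$ good'' versus ``$\epsilon$ exists with $\epsilon\pa = 0$'' is handled consistently, and that conjugating by $\sigma_\epsilon$ interacts well with the block decomposition $A = A_1 * A_2 * \C\langle h\rangle$. Since $\sigma_\epsilon$ acts as $c \mapsto c + \epsilon(c)$ on each generator and $\epsilon(h) = 0$ means $\sigma_\epsilon$ fixes $h$, the automorphism $\sigma_\epsilon$ restricts to $\sigma_{\epsilon_i}$ on each $A_i$; combined with the divisibility-by-$h$ statement this should make the constant-term condition for $\pa^\epsilon$ equivalent to the conjunction of the conditions for $\pa_1^{\epsilon_1}$ and $\pa_2^{\epsilon_2}$. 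A secondary, milder point is confirming that the degree constraint $|h| = n-1$ with $n = \dim L \ge 2$ genuinely forces $\epsilon(h) = 0$; this holds because an augmentation, being a degree-$0$ algebra map to $\C$ concentrated in degree $0$, must vanish on every generator of nonzero degree, and $n - 1 \ge 1$.
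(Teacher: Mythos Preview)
Your argument is correct and rests on the same two ingredients the paper uses: the divisibility statement in Lemma~\ref{CS.lma}(1) and the grading constraint $|h|=n-1\neq 0$ forcing $\epsilon(h)=0$. The organization differs, however. The paper introduces intermediate DGAs $(A_i',\pa_i')$ for thickenings $L_i'\subset L$ (so that $h$ is a common generator of $A_1'$ and $A_2'$), observes that the connect-sum geometry already yields a push-out square with $A_3=\C\langle h\rangle$ without invoking the pinching isotopy, and then factors the bijection in two steps: first $\epsilon\leftrightarrow(\epsilon_1',\epsilon_2')$ via diagram~(\ref{PushOutAugmentation.eq}), then $\epsilon_i'\leftrightarrow\epsilon_i$ via the divisibility-by-$h$ argument. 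You collapse these two steps into one, working directly with $A_i\hookrightarrow A$ as graded subalgebras and never mentioning the push-out square. Your route is more self-contained and slightly more elementary; the paper's route has the virtue of tying the lemma explicitly to Theorem~\ref{VK.thm} and the push-out formalism that is the paper's main theme. Your closing remarks about the equivalence of the $\epsilon\pa=0$ condition with the tame-automorphism formulation are correct and adequately handle that bookkeeping.
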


\begin{proof}
We can assume the projected Reeb chord $\pi_F(h)$ sits on the hypersurface
in $N\times \R$ which separates $L_1$ and $L_2.$ 
Choose $L_1', L_2' \subset L$ to be some thickening of the two components of $L$
when decomposed by the hypersurface such that $L_1'\cap L_2' \subset \partial M.$
The DGA $(A_i', \pa_i')$ of $L_i'$ contains the sub-DGA $(A_3, \partial_3 = 0)$ generated by the chord $h.$ 
Moreover, without the isotopy $\phi_\lambda$ from Equation \ref{squeeze.eq}, the DGA $(A,\pa)$ of $L$ already has a push-out square
\[
\xymatrix{
A_3 \ar[d] \ar[r] &A_1' \ar[d]\\
A_2' \ar[r] &A }
\]
From Equation (\ref{PushOutAugmentation.eq}) we have a natural bijection between the set of augmentations $\epsilon$ of $(A, \pa)$ and
the set of pairs of augmentations $(\epsilon_1', \epsilon_2')$ of
$(A_1', \pa'_1)$ and $(A_2', \pa'_2).$ 

For any augmentation of $(A_i', \pa'_i)$, $\epsilon'_i(h) = 0,$
since $|h|_i \ne 0.$
For any other generator $c \ne h$ of $A_i',$ Lemma \ref{CS.lma} implies
\[
\pa'_i(c) = \pa_L(c) = \pa_i(c) + hw
\]
for some (possibly empty) sum of words $w.$ 
Since $\epsilon_i'(hw) = \epsilon_i'(h) \epsilon_i'(w) = 0,$ we get a natrual bijection
between augmentations $\epsilon_i'$ of $(A_i', \pa_i')$ and
$\epsilon_i$ of $(A_i, \pa_i)$ by setting $\epsilon_i(c) = \epsilon_i'(c)$ for generators $c \ne h.$
\end{proof}

\begin{thm} \label{CS.thm}
Let $(A_{\mathbb{C}H_1(L_1)}, \pa_{L_1}),(A_{\mathbb{C}H_1(L_2)}, \pa_{L_2}),(A_{\mathbb{C}H_1(L)}, \pa_{L})$ be as in Lemma \ref{CS.lma}. Then as algebraic subsets
\[
\Aug(A_{\mathbb{C}H_1(L_1)},\partial_{L_1})\times \Aug(A_{\mathbb{C}H_1(L_2)},\partial_{L_2})= \Aug(A_{\mathbb{C}H_1(L)},\partial_L).
\]
\end{thm}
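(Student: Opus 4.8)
The plan is to identify $\Aug(A_{\C H_1(L)}, \pa_L)$ as the Zariski closure of a set of good points in $\Spec(\C H_1(L)) = \Spec(\C H_1(L_1)) \times \Spec(\C H_1(L_2))$ and show that set is, up to closure, the product of the corresponding sets for $L_1$ and $L_2$. First I would unravel what it means for a point $p = (p_1, p_2) \in \Spec(\C H_1(L_1)) \times \Spec(\C H_1(L_2))$ to be good for $(A_{\C H_1(L)}, \pa_L)$: it means the specialized DGA $(p A_{\C H_1(L)}, p \pa_L)$ over $\C$ admits an augmentation. Using Lemma~\ref{CS.lma}(1)--(2) and the identification $\C H_1(L) \cong \C H_1(L_1) \otimes_\C \C H_1(L_2)$ from the pushout square (\ref{PushOutCoef.eq}), the specialization $p \pa_L$ restricted to generators $c \in A_i$ agrees with $p_i \pa_{L_i}(c)$ plus a multiple of $h$, and $p \pa_L(h) = 0$. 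So the $\C$-DGA $(p A_{\C H_1(L)}, p\pa_L)$ is, in the notation of Lemma~\ref{CS.lma}, exactly the algebra $(A, \pa)$ of the connect sum $L = L_1 \# L_2$ for the specialized data, with $(A_i, \pa_i)$ being $(p_i A_{\C H_1(L_i)}, p_i \pa_{L_i})$.

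The heart of the argument is then Lemma~\ref{Augmentation.lma}: augmentations of $(A, \pa)$ are in canonical bijection with pairs of augmentations of $(A_1, \pa_1)$ and $(A_2, \pa_2)$. Applying this to the specialized data, $p = (p_1, p_2)$ is a good point for $(A_{\C H_1(L)}, \pa_L)$ if and only if $(p_1 A_{\C H_1(L_1)}, p_1 \pa_{L_1})$ admits an augmentation \emph{and} $(p_2 A_{\C H_1(L_2)}, p_2 \pa_{L_2})$ admits an augmentation — that is, if and only if $p_1$ is good for $(A_{\C H_1(L_1)}, \pa_{L_1})$ and $p_2$ is good for $(A_{\C H_1(L_2)}, \pa_{L_2})$. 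Hence the set $X'$ of good points for $L$ equals $X'_1 \times X'_2$, the product of the good-point sets for $L_1$ and $L_2$, as subsets of $\Spec(\C H_1(L_1)) \times \Spec(\C H_1(L_2))$.

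It remains to pass to Zariski closures: I must check that $\overline{X'_1 \times X'_2} = \overline{X'_1} \times \overline{X'_2}$ in the product. This is a standard fact — for subsets $S_1 \subset Y_1$, $S_2 \subset Y_2$ of varieties, $\overline{S_1 \times S_2} = \overline{S_1} \times \overline{S_2}$ in the Zariski topology on $Y_1 \times Y_2$ (one inclusion is clear since the closure on the right is closed and contains $S_1 \times S_2$; the other follows because $\overline{S_1} \times \overline{S_2} \subseteq \overline{S_1 \times S_2}$ can be seen by fixing points and using that slices $\{y_1\} \times Y_2$ and $Y_1 \times \{y_2\}$ carry the subspace topology matching $Y_2$ and $Y_1$). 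Combining, $\Aug(A_{\C H_1(L)}, \pa_L) = \overline{X'} = \overline{X'_1 \times X'_2} = \overline{X'_1} \times \overline{X'_2} = \Aug(A_{\C H_1(L_1)}, \pa_{L_1}) \times \Aug(A_{\C H_1(L_2)}, \pa_{L_2})$, which is the desired equality of algebraic subsets.

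The main obstacle I anticipate is the bookkeeping in the first two steps: making precise that specializing the $\C H_1(L)$-DGA at a product point $(p_1, p_2)$ reproduces exactly the hypotheses of Lemma~\ref{Augmentation.lma} for the specialized $\C$-DGAs — in particular that $p\pa_L(c) = p_1\pa_{L_1}(c) + h w$ genuinely holds with the $h$-divisibility surviving specialization (this is immediate from Lemma~\ref{CS.lma}(1), since divisibility by $h$ is preserved under any ring map on coefficients), and that the grading condition $|h| = n-1 \neq 0$ used in Lemma~\ref{Augmentation.lma} is unaffected by specialization. Once that identification is set up cleanly, the rest is formal.
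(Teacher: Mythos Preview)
Your proposal is correct and follows essentially the same approach as the paper: both arguments identify $\Spec(\C H_1(L)) \cong \Spec(\C H_1(L_1)) \times \Spec(\C H_1(L_2))$ via the coefficient pushout square, invoke Lemma~\ref{Augmentation.lma} to show that a point $(p_1,p_2)$ is good for $L$ if and only if each $p_i$ is good for $L_i$, and then prove the standard Zariski-closure identity $\overline{X_1' \times X_2'} = \overline{X_1'} \times \overline{X_2'}$ by the slice argument. Your write-up is slightly more explicit about why specialization at $(p_1,p_2)$ reproduces the hypotheses of Lemma~\ref{Augmentation.lma}, but the substance is the same.
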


\begin{proof}
We have the push-out square of commutative $\C$-algebras:

\[
\xymatrix{
&\C[H_1(L_1)]\ar@{.>}[dr]\ar[drrr]^{f_1} &&& \\
\C\ar[ur]\ar[dr] &&\C[H_1(L)]\ar[rr]|f&&\C \\
&\C[H_1(L_2)]\ar@{.>}[ur]\ar[urrr]_{f_2}&&&
}
\]

So, given any pair $(f_1,f_2)$ such that the diagram above commutes, there is a unique morphism $f : \mathbb{C}H_1(L)\rightarrow \mathbb{C}$ making the above diagram commute. Conversely, any morphism $f : \mathbb{C}H_1(L)\rightarrow \mathbb{C}$ determines a unique pair $(f_1,f_2).$ We claim that $(f_1,f_2)$ are good morphisms if and only if $f$ is also a good morphism. 
By definition, if $(f_1,f_2)$ is a good pair, then there exist augmentations $\epsilon_1$ and $\epsilon_2$ for the respective DGAs $(f_1(A_{\mathbb{C}H_1(L_1)}),f_1\partial_{L_1})$ and 
$(f_2(A_{\mathbb{C}H_1(L_2)}),f_2\partial_{L_2}).$
By Lemma \ref{Augmentation.lma}, there is a natural bijection between the set of such pairs and the set of augmentations $\epsilon: (f(A_{\mathbb{C}H_1(L)}),f\partial_{L})\rightarrow \C.$



Next note that $\Spec$ is an equivalence of categories from finitely generated $\C$-algebras to the category of algebraic subsets over $\C$, so it takes the push-out square to the pull-back square:

\[
\xymatrix{
&\Spec(\C[H_1(L_1)])\ar[dl] &&& \\
\Spec(\C) &&\Spec(\C[H_1(L)])\ar@{.>}[ul]\ar@{.>}[dl] &&\Spec(\C) \ar[ll]|f \ar[ulll]_{f_1} \ar[dlll]^{f_2} \\
&\Spec(\C[H_1(L_2)]) \ar[ul]&&&
}
\]

Since $\Spec(\C)$ is a point, it follows that $\Spec(\C[H_1(L)])\cong \Spec(\C[H_1(L_1)]) \times \Spec(\C[H_1(L_2)]).$ (The product is taken in the category of algebraic subsets, not in the category of topological spaces.)

If $X$ is the collection of good points in $\Spec(\C[H_1(L_1)])$ and $Y$ is the collection of good points in $\Spec(\C[H_1(L_2)])$, we see that $X \times Y$ is the collection of good points in $\Spec(\C[H_1(L)])\cong \Spec(\C[H_1(L_1)]) \times \Spec(\C[H_1(L_2)]).$ We claim that $Cl(X \times Y)$= $Cl(X)\times Cl(Y)$ where $Cl$ denotes the Zariski closure. To see this, we note that the subspace $\Spec(\C[H_1(L_1)]) \times {a}$ is homeomorphic to $\Spec(\C[H_1(L_1)])$ via the projection map for any $a\in Y$. 
So $(\Spec(\C[H_1(L_1)]) \times {a}) \cap Cl(X \times Y)$
is a closed set containing $(X\times{a})\cap Cl(X \times Y),$
and thus must contain $Cl(X)\times {a}.$
Taking the union with respect to $a\in Y$, we find that  $Cl(X) \times Y \subset Cl(X \times Y) $. The closed set $({b}\times \Spec(\C[H_1(L_2)]))\cap Cl(X \times Y)$ contains ${b}\times Y$ for any $b \in Cl(X).$ Taking the union with respect to $b \in Cl(X)$, we find that  $Cl(X) \times Cl(Y) \subset Cl(X \times Y)$ and since
$Cl(X) \times Cl(Y)$ is closed and contains $X \times Y,$ equality holds.
\end{proof}

The cusp connect sum can also be formed for 1-dimensional Legendrian knots, $L_1 \sharp L_2.$ 
In this case, Sivek constructs a correspondence between augmentations similar to the one in Lemma \ref{Augmentation.lma} and proves a relation for their Poincar\'e polynomials  \cite[Section 3.5]{Sivek11}: 
$P_{A, \pa ,\epsilon}(t) = P_{A_1, \pa_1 ,\epsilon_1}(t) +  P_{A_2, \pa_2 ,\epsilon_2}(t) - t.$
We can replicate Sivek's proof using iterated applications of Mayer-Vietoris sequences for $(n \ge 2)$-dimensional Legendrians to get that 
\[
P_{A, \pa ,\epsilon}(t) = P_{A_1, \pa_1 ,\epsilon_1}(t) +  P_{A_2, \pa_2 ,\epsilon_2}(t) - t^{n} \,\,\, \mbox{or} \,\,\, P_{A_1, \pa_1 ,\epsilon_1}(t) +  P_{A_2, \pa_2 ,\epsilon_2}(t) + t^{n-1}.
\]
From some simple examples, we conjecture that only the first case can occur. To eliminate the second, however, one needs to better understand in the Mayer-Vietoris sequence the image of $LCH(A_3, \pa_3=0, \epsilon_3=0)$  generated by the chord $h.$ 

\subsection{Examples}


In this subsection, we apply Proposition \ref{LCH.prop} to
construct two Legendrian surface homotopies which are not isotopies.

\begin{ex} \label{Homotopy1.ex}
Consider a Legendrian surface immersed in $\R^5$ with one transverse double point.
Let $L$ and $L'$ be $C^2$-small deformations of this immersion (support near
the double point), where the front projections
of the two branches in $\pi_F(\R^5)$ appear in Figure \ref{Homotopy1.fig}. 
Assume that the DGA of $L$ is good, that is, has an augmentation.
Since $L$ and $L'$ are Legendrian homotopic and 2-dimensional,
by \cite[Section 3]{EkholmEtnyreSullivan05a} they have the same
rotation class and Thurston-Bennequin invariant. 
Assume the Maslov class vanishes so that the grading is well-defined. 
Assume that $m\ne 1,$ where $m$ is
the number of down-cusps minus the number of up-cusps traversed, when traveling from the upper sheet to the lower sheet of either diagram.
Moreover, assume that there are no other sheets of $\pi_F(L)$ (or $\pi_F(L')$)
with $(x_1,x_2)$-coordinates nearby, as indicated in Figure \ref{Homotopy1.fig}.
Choose a small curve $h \subset N$ encircling the $(x_1,x_2)$-coordinates
of the double point. 
We imagine performing the homotopy after pinching above the curve $h$ (see the proof of Theorem \ref{VK.thm}).

\begin{figure}
 \centering
  \includegraphics[height=1.5in, trim=7.25in 0in 0in 0in]{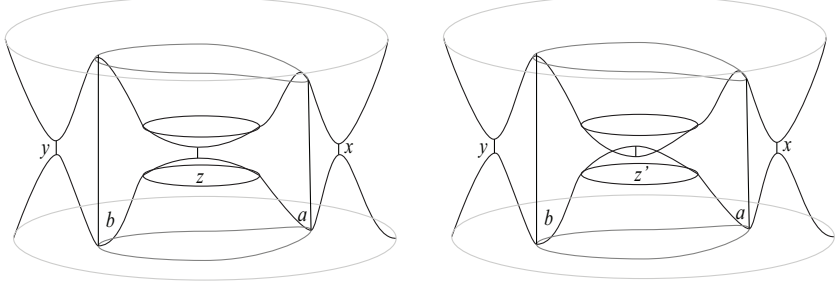}
\caption{The front projection of the two local branches
before and after the double point.  
The (front projection) of the chords $x$ and $y$ lie above the circle  $h \subset \R^2_{x_1,x_2}$ from
the proof of Theorem \ref{VK.thm}, above which the Legendrian is ``pinched."
The front projection of the Reeb chords  are denoted by vertical lines.
As Morse-Bott maxes, $b$ and $y$ are slightly longer than their counterpart Morse-Bott mins, $a$ and $x.$}
\label{Homotopy1.fig}
\end{figure}


The gradings of the chords  are
\[
|b| = 1+m, \quad |a| = |y| = m, \quad |x| = |z| = m-1, \quad |z'| = 1-m.
\]
Here we imagine a (so-called ``Morse-Bott") family of Reeb chords sitting over a circle in the $(x_1,x_2)$-plane that has been perturbed to a pair of transverse Reeb chords of relative index 1.
This applies to the pairs $(a,b)$ and $(x,y).$ See \cite[Figure 6]{EkholmEtnyreNgSullivan11}
for example.

Let $(A,\pa)$ and $(A', \pa')$ denote the DGAs of $L$ and $L'.$
 By comparing actions and using the usual gradient flow arguments from Morse-Bott perturbations, we compute the differentials to be
\begin{eqnarray*}
&\pa b = \pa'b = a +a = 0, \quad \pa y = \pa'y = x + x = 0, \quad \pa x = \pa'x = 0,& \\
& \pa a = z + x, \quad \pa'a = x, \quad \pa z = 0, \quad \pa'z' = 0.
\end{eqnarray*}
 
For any augmentation $\epsilon,$ the hypothesis $m \ne 1$ implies 
$\epsilon(z) = 0 = \epsilon(z').$
It is easy to to check that augmentations for $(A,\pa)$ and $(A', \pa')$
agree under the canonical identification of all other generators.
Moreover, under this identification of augmentations, 
\[
LCH_{1-m}(L', \pa', \epsilon) = LCH_{1-m}(L, \pa, \epsilon) \oplus \Z_2 \langle z' \rangle.
\] 
Thus $L$ and $L'$ are not Legendrian isotopic.

\end{ex}

\begin{ex} \label{Homotopy2.ex}
Consider the homotopy from Example \ref{Homotopy1.ex}, but now suppose there may be many local sheets of $\pi_F(L)$ with the same $(x_1, x_2)$-coordinates as the double point.
See Figure \ref{Homotopy2.fig}. 
Assume that $\pi_F(L)$ (and hence $\pi_F(L')$)
has no cusp-edges. For example, see the conormal construction of
the braid in \cite{EkholmEtnyreNgSullivan11}.
Note that without cusp-edges, $L$ and $L'$ have DGAs which are already augmented.

If we choose the curve $h$ (see Example \ref{Homotopy1.ex}) small enough,
we can assume all other sheets of $\pi_F(L)$ and $\pi_F(L')$ are planar in this region, so that after we pinch the Legendrians, the $\{x_2=0\}$-slice of $\pi_F(L)$ looks like Figure \ref{Homotopy2.fig} and 
$\pi_F(L')$ is a small homotopy of this similar to Figure \ref{Homotopy2.fig}.

\begin{figure}
 \centering
  \includegraphics[height=2in]{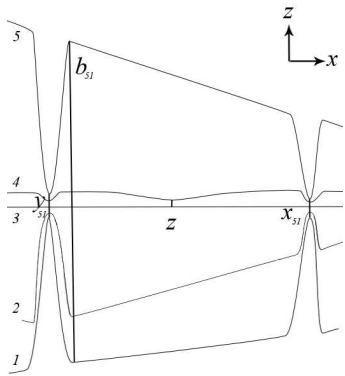}
\caption{
This is the $\{x_2=0\}$-slice of the front $\pi_F(L)$ just before the double point appears in the homotopy.
For each pair of sheets given by the graphs $f_i>f_j$ the function
$f_i - f_j$ has a max $b_{ij},$ and sitting over the curve $h,$ it has
  a saddle $y_{ij}$ and min $x_{ij}.$ 
 The pair of sheets $A=4$ and $B=3$ undergoing the homotopy have two additional
Reeb chords $a$ and $z,$ as labeled in Figure \ref{Homotopy1.fig}. Note that not all Reeb chords appear in this slice, see Figure \ref{Homotopy3.fig}.
}
\label{Homotopy2.fig}
\end{figure}

Let $(A,\pa)$ and $(A', \pa')$ denote the DGAs of $L$ and $L'.$
Because there are no cusp-edges, their gradings are
\[
|b_{ij}| = 1, \quad |y_{ij}| = 0, \quad |x_{ij}| = -1.
\]
and their differentials satisfy
\[
\pa b_{ij} = y_{ij} + Q, \quad \pa' b_{ij} = y_{ij} + Q', \quad \pa y_{ij} = \pa' y_{ij} ,
\quad \pa x_{ij} = \pa' x_{ij} 
\]
where $Q$ and $Q'$ are quadratic terms or higher.

We claim that  we can arrange  $L$ such that no word in $Q$ is purely a product of generators of type $y_{kl}.$ Similarly for $L'$ and $Q'.$

Let $(t,s = {\bf s}) \in [0,1+\epsilon] \times [0, 2\pi) $ denote the polar coordinates for the disk in $\R^2_{x_1,x_2}$ concentric with the circle $h.$
This extends a modified $t$-coordinate defined in the proof of Theorem \ref{VK.thm}, since we set the circle $h$ to be $\{t=1\}$ instead of $\{t=0\}.$ Let $A, B \in \nats$ index the two
sheets of the front defining the chord $z,$ with the $z$-coordinate of sheet $A,$ $z(A),$ greater than $z(B).$
Without loss of generality, we can assume that the disk is small enough such that $z(B) = 0$ and
$|\pa z(A)/\pa x_i| \ll 1.$ 
The explicit nature of the isotopy $\phi_\lambda,$ see Equation (\ref{squeeze.eq}), combined with the linearity of the sheets $B, k$ and $l,$ imply that the max $b_{kl}$ (resp. $b_{kB}, b_{Bl}$) and saddle $y_{kl}$  (resp. $y_{kB}, y_{Bl}$) sit above points with the same $s$-coordinate and nearby $t$-coordinates.   Moreover, the unique gradient flow tree (flow line, actually) connecting them is radial. 
Since $|  \pa z(A)/\pa x_i - \pa z(B)/\pa x_i| \ll 1,$ similar statements can be made for $b_{kA}, b_{Al}, \ldots,$ as well.
If we {\em a priori} assume that the sheets are in general position prior to the pinching isotopy, this implies that the gradient flow line connecting $b_{kl}$ to $y_{kl}$ cannot intersect the gradient flow line connecting $b_{pq}$ to $y_{pq},$  as indicated in Figure \ref{Homotopy3.fig}. This include, with a small perturbation, the $b_{AB}$ to $y_{AB}$ flow line. There is an exception: the flow line connecting $b_{kA}$ to $y_{kA}$ (resp. $b_{Al}$ to $y_{Al}$)
may intersect the one connecting the nearby $b_{kB}$ to $y_{kB}$ (resp. $b_{Bl}$ to $y_{Bl}$); however,
this will not concern us.

 Consider a contribution to $Q$ from a gradient flow tree with a positive puncture at $b_{ij}$ and $n \ge 2$ negative punctures at $y_{k_1 l_1}, \ldots,
 y_{k_n l_n}.$ Since the tree has no valence-one vertices mapping to cusp-edges, it has at least two valence-one vertices mapping to, say $y_{kl}$ and $y_{pq},$ whose edges end at a common valence-three vertex of the tree. (We consider a valence-two vertex puncture to be a valence-three vertex with a zero-length edge connecting it to a valence-one vertex puncture.) For this to happen, $l=p$ or $k=q$ and the stable manifolds of $y_{kl}$ and $y_{pq}$ must intersect. (So $y_{kl}$ and $y_{pq}$ are distinct,
 and they cannot be pairs $y_{kA}, y_{kB}$  or $y_{Al}, y_{Bl}$ which lie near each other, as in Figure \ref{Homotopy3.fig}.)  But this contradicts the above disjointness of the two flow lines connecting  $b_{kl}$ to $y_{kl}$ and $b_{pq}$ to $y_{pq},$
 since the other stable manifolds  of $y_{kl}$ and $y_{pq}$
 lie outside the disk bounded by $h.$ 
 This proves the claim for $Q.$
 
\begin{figure}
 \centering
  \includegraphics[height=3in]{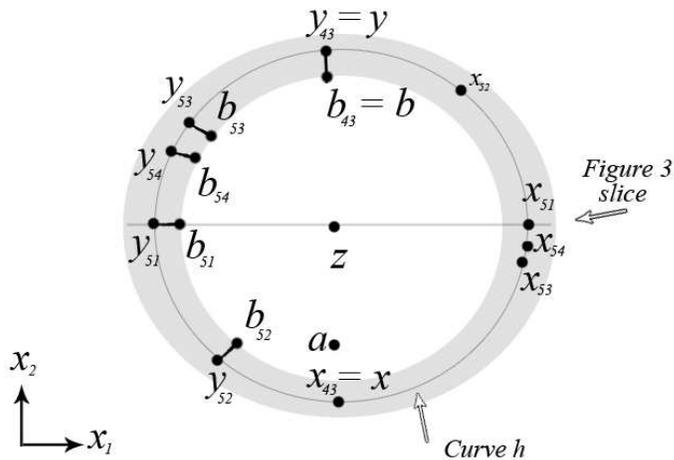}
\caption{
The shaded area, $\{1-\epsilon \le t \le 1+\epsilon\} \subset \R^2_{x_1,x_2},$  denotes the support of the isotopy $\phi_\lambda$ from Equation (\ref{squeeze.eq}). 
The horizontal line denotes the $\{x_2=0\}$-slice from Figure \ref{Homotopy2.fig}.
To avoid clutter, only chords starting at sheet 5 or running between sheets $A=4$ and $B=3$ are indicated. The gradient flow trees (lines) in $\M(b_{ij};y_{ij})$ are indicated by short solid radial lines. Here $b_{43}, y_{43}, x_{43}$ correspond to $b,y,x$ from Figure \ref{Homotopy1.fig}. Flow lines connecting $b_{54}$ to $y_{54}$ and $b_{53}$ to $y_{53}$ are close, possibly intersecting. 
}
\label{Homotopy3.fig}
\end{figure}

Computations for $\pa b, \pa a , \ldots$ are as in Example \ref{Homotopy1.ex}.
For both $\pa$ and $\pa'$ any augmentation must vanish on $b_{ij}$ and $x_{ij}$ for grading reasons. Because no term in $Q$ or $Q'$ is a word made purely of generators of the form $y_{kl},$ a 
quick algebraic check shows that the augmentation must vanish on $y_{ij}$ as the unique linear term in
a differential. 
Thus, like in Example \ref{Homotopy1.ex}, the augmentations
of $(A,\pa)$ and $(A', \pa')$ must agree, and we conclude
\[
LCH_{1}(L', \pa', \epsilon) = LCH_{1}(L, \pa, \epsilon) \oplus \Z_2 \langle z' \rangle.
\] 

\end{ex}

\end{document}